\newtheorem{theorem}{Theorem}[section]
\newtheorem{lemma}[theorem]{Lemma}
\newtheorem{corollary}[theorem]{Corollary}
\newtheorem{proposition}[theorem]{Proposition}
\theoremstyle{definition}
\newtheorem{definition}[theorem]{Definition}
\newtheorem{example}[theorem]{Example}
\theoremstyle{remark}
\newtheorem*{remark}{Remark}
\newcommand{\SL}{\textrm{SL}}
\title{Low degree minimal generators of phylogenetic semigroups}
\author{Kaie Kubjas}
\date{}
\begin{document}

\maketitle 

\begin{abstract}

The phylogenetic semigroup on a graph generalizes the Jukes-Cantor binary model on a tree. Minimal generating sets of phylogenetic semigroups have been described for trivalent trees by Buczy{\'n}ska and Wi{\'s}niewski, and for trivalent graphs with first Betti number 1 by Buczy{\'n}ska.  We characterize degree two minimal generators of the phylogenetic semigroup on any trivalent graph. Moreover, for any graph with first Betti number 1 and for any trivalent graph with first Betti number 2 we describe the minimal generating set of its phylogenetic semigroup.
\end{abstract}

\section{Introduction}

Let $G$ be a graph. The phylogenetic semigroup on $G$ is a set of labelings of edges of $G$ by non-negative integers fulfilling some additional conditions. This set has naturally the structure of a semigroup by edge-wise addition.

The phylogenetic semigroup on a trivalent graph was defined by Buczy{\'n}ska~\cite{Buczynska12} as a generalization of the affine semigroup of the Jukes-Cantor binary model on a trivalent tree. Jukes-Cantor binary model is the simplest group-based model with the underlying group $\mathbb{Z}_2$. In~\cite{BBKM11}, Buczy{\'n}ska, Buczy{\'n}ski, Micha{\l}ek and the author further generalized the definition of the phylogenetic semigroup to arbitrary graphs. This definition agrees with Buczy{\'n}ska's definition for trivalent graphs. 

Besides phylogenetic algebraic geometry, phylogenetic semigroups appear in several other contexts.
In~\cite{JW92}, Jeffrey and Weitsman quantized the moduli space of flat $\textrm{SU}(2)$ connections on a two-dimensional surface of genus $g$ using a real polarization. The dimension of the quantization is counted by integral fibers of the polarization, which are in one-to-one correspondence with the labelings of a trivalent graph $G$ with first Betti number $g$ that satisfy the quantum Clebsch-Gordan conditions. These labelings are exactly the elements of the phylogenetic semigroup on $G$. Moreover, the number of labelings that satisfy the quantum Clebsch-Gordan conditions matches the Verlinde formula for the $\textrm{SU}(2)$ Wess-Zumino-Witten model in the quantum field theory~\cite{Verlinde88}.

In more recent work, Sturmfels and Xu~\cite{SX10} showed that the projective coordinate ring of the Jukes-Cantor binary model is a sagbi degeneration of the Cox ring of the blow-up of $\mathbb{P}^{n+3}$ at $n$ general points. Manon generalized their construction showing that the algebra of $\SL_2(\mathbb{C})$ conformal blocks for a stable curve of genus $g$ with $n$ marked points flatly degenerates to the semigroup algebra of the phylogenetic semigroup on a graph with first Betti number $g$ with $n$ leaves~\cite{Manon09}. 

Low degree minimal generators  of phylogenetic semigroups have been previously studied for trees and graphs with first Betti number 1. Phylogenetic semigroups on trees are generated by degree one labelings, known as networks~\cite{BW07,DBM12}. Buczy{\'n}ska studied minimal generators of phylogenetic semigroups on trivalent graphs with first Betti number 1. She proved that any minimal generator of the phylogenetic semigroup on a trivalent graph with first Betti number 1 has degree at most two, and explicitly described minimal generating sets~\cite{Buczynska12}. 

We extend this result from trivalent graphs to general graphs, i.e. we describe the minimal generating set of the phylogenetic semigroup on any graph with first Betti number $g\leq 1$. Moreover, we characterize degree two minimal generators on trivalent graphs with first Betti number $g>1$.

We also specify the bound on the maximal degree of the minimal generating set for graphs with first Betti number 2. By~\cite{BBKM11}, the maximal degree of the minimal generating set of the phylogenetic semigroup on a graph with first Betti number 2 is at most three. We explicitly characterize when  the maximal degree three is attained, and when the maximal degree is equal to two or one. If the degree three is attained, we describe the degree three minimal generators.

Finally, we list maximal degrees of minimal generating sets of phylogenetic semigroups on some graphs with first Betti number 3, 4 or 5. We speculate that the maximal degree depends on the separateness of the cycles of the graph. Having low maximal degree is especially interesting from the perspective of SL$_2(\mathbb{C})$ conformal block algebras as this ensures low maximal degree for the minimal generators of these algebras, see~\cite{Manon_bounds_on_generators}.

In Section~\ref{section:phylogenetic_semigroups}, we introduce basics about phylogenetic semigroups on graphs. In Section~\ref{section:first_betti_number_one}, we give a shortened proof of Buczy{\'n}ska's theorem about minimal generators of phylogenetic semigroups on trivalent graphs with first Betti number 1, and we generalize the statement to general graphs with first Betti number 1. In Section~\ref{section:degree_two_generators}, we characterize degree two minimal generators on an arbitrary trivalent graph. In Section~\ref{section:first_betti_number_two}, we study the explicit maximal degree of the minimal generating set of the phylogenetic semigroup on a graph with first Betti number 2. In Section~\ref{section:degree_three_generators}, we describe minimal generating sets of phylogenetic semigroups on trivalent graphs with first Betti number 2. In the last section, we list examples of these maximal degrees for graphs with first Betti numbers 3,4 and 5.

\section{Phylogenetic semigroups}\label{section:phylogenetic_semigroups}

In this section, we define phylogenetic semigroups on graphs as in~\cite{BBKM11} and recall some basic properties about these semigroups.

\begin{definition}
Let $G$ be a graph. A \textit{path} in $G$ is a sequence of unrepeated edges which connect a sequence of vertices. Moreover, we require the first and the last vertex to be either both leaves or equal. In the latter case, a path is called a \textit{cycle}.  A \textit{network} is a disjoint union of paths. A \textit{cycle edge} is an edge on a cycle of $G$. A \textit{cycle leg} is an edge incident to a cycle edge, but is not a cycle edge.  We denote the \textit{disjoint sum} of graphs $G_1$ and $G_2$ by $G_1\sqcup G_2$. We denote by $G^e$ the graph obtained from $G$ by cutting an internal edge $e$. More specifically, \textit{cutting an internal edge} $e$ means replacing $e$ by two leaf edges $e_1$ and $e_2$ where $\partial_1(e_1)=\partial_1(e)$ and $\partial_1(e_2)=\partial_2(e)$. Here $\partial_1(e), \partial_2(e)$ denote endpoints of an edge $e$.
\end{definition}

\begin{definition}
Let $T$ be a tree with the set of edges $E$ and the set of inner vertices $I$.  Define lattices
\begin{displaymath}
 L_T=\{x\in \mathbb{Z}^E:\sum _{v\in e}x_e \in 2\mathbb{Z} \textrm{ for every } v\in I\}
\end{displaymath}
and
\begin{displaymath}
L_T^{gr}=L_T\oplus \mathbb{Z} 
\end{displaymath}
together with the degree map
\begin{displaymath}
\textrm{deg}:L_T^{gr}=L_T\oplus \mathbb{Z}\rightarrow \mathbb{Z}
\end{displaymath}
given by the projection on the last summand. 
\end{definition}

\begin{definition}
The lattice polytope associated with the Jukes-Cantor binary model on $T$ is
\begin{displaymath}
 P_T=\textrm{conv}\{x\in L_T:x_e\in\{0,1\} \textrm{ for every } e\in E\}.
\end{displaymath}
The phylogenetic semigroup $\tau(T)$ on $T$ is
\begin{displaymath}
 \tau(T)=\textrm{cone}(P_T\times \{1\})\cap L_T^{gr}.
\end{displaymath}
\end{definition}

\begin{definition}\label{defin_graph_path_network}
\textit{First Betti number} of a graph is the minimal number of cuts that would make the graph into a tree.
\end{definition}

Given a graph $G$, we associate a tree $T$ with a set of distinguished pairs of leaves to $G$ and define the phylogenetic semigroup on $G$ using the phylogenetic semigroup on $T$. We construct the tree $T$ inductively on first Betti number $g$ of $G$. If $g=0$, then $G$ itself is the associated tree. If $g>0$, then we replace a cycle edge $\overline{e}$ by two leaf edges $e'$ and $e''$ where $\partial_1(e')=\partial_1(\overline{e})$ and $\partial_1(e'')=\partial_2(\overline{e})$. This replacement gives a graph with first Betti number $g-1$ and a distinguished pair of leaves $(e',e'')$. Doing this procedure $g$ times gives a tree $T$ and $g$ distinguished pairs of leaves. Although the tree $T$ and the set of distinguished pairs of leaves are in general not unique, the phylogenetic semigroup on $G$ does not depend on the choices we make.

\begin{definition}
Let $G$ be a graph.
Let $T$ be the associated tree with a set of distinguished pairs of leaves $\{(e_i',e_i'')\}$.
We define the \textit{phylogenetic semigroup on $G$} as
$$\tau(G)=\tau(T)\cap \bigcap_{i}(x_{e_i'}=x_{e_i''}).$$
In other words, $\tau(G)$ consists of those labelings of $\tau(T)$ where the label on $e_i'$ is identical to the one on $e_i''$,
and thus the labeling  of $T$ gives a labeling of $G$.
Similarly, define the lattice
\begin{displaymath}
L_G^{gr}=L_T^{gr}\cap \bigcap_{i}(x_{e_i'}=x_{e_i''})
\end{displaymath}
together with the degree map induced by the degree map of $L_T^{gr}$.
\end{definition}

The phylogenetic semigroup $\tau(G)$ has a unique minimal generating set. We call the elements of the minimal generating set minimal generators, or sometimes also indecomposable elements of $\tau(G)$.

As we often deal with trivalent graphs, we introduce notation specific to these graphs. Let $G$ be a trivalent graph and $v$ be an inner vertex of $G$. Let $\{e_1, e_2, e_3\}$ be the edges of the tripod and
$i_v$ a map that is locally an embedding of the tripod into $G$ and sends the central vertex of the tripod
to $v$.
For $\omega \in L_G^{gr}$ denote
\[
\begin{array}{lcr}
a_v(\omega):=\omega_{i_v(e_1)},  &
b_v(\omega):=\omega_{i_v(e_2)},  &
c_v(\omega):=\omega_{i_v(e_3)}.  \\
\end{array}
\]
In other words, $a_v, b_v, c_v$ measure the coefficients of $\omega$
at the edges incident to $v$.

\begin{definition}
The \textit{degree} of $\omega\in L_G^{gr}$ \textit{at an inner vertex} $v\in I$ is
\[
\deg_v(\omega):=\frac{1}{2}\bigl(a_v(\omega)+b_v(\omega)+c_v(\omega)\bigr).
\]
\end{definition}

\begin{lemma}[\cite{Buczynska12}, Definition~2.18 and Lemma~2.23]\label{def_cone_of_G}
For a trivalent graph $G$ the \textit{phylogenetic semigroup $\tau(G)$ on $G$} is the set of elements $\omega$ satisfying the following conditions
\begin{enumerate}
\renewcommand{\theenumi}{\textnormal{[$\heartsuit\!\!\heartsuit$]}}
\item  \label{item_parity_condition} parity condition:  $\omega \in L_G^{gr}$,
\renewcommand{\theenumi}{\textnormal{[+]}}
\item  \label{item_non-negativity_condition} non-negativity condition:
$\omega_e \ge 0$ for any $e \in E$,
\renewcommand{\theenumi}{\textnormal{[$\triangle$]}}
 \item  \label{item_triangle_inequalities} triangle inequalities:
$ |a_v(\omega)-b_v(\omega)| \leq c_v(\omega) \leq a_v(\omega)+ b_v(\omega)$, for each inner vertex $v\in I$,
\renewcommand{\theenumi}{\textnormal{[\textdegree]}}
\item  \label{item_degree_inequality}
degree inequalities:  $\deg(\omega) \geq \deg_v(\omega)$ for any $v\in I$.
\end{enumerate}
\end{lemma}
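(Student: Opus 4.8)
The plan is to disentangle the two ingredients bundled into the four conditions: the parity condition is nothing but membership in the lattice $L_G^{gr}$, whereas non-negativity, the triangle inequalities, and the degree inequalities are linear and ought to carve out exactly the real cone $\operatorname{cone}(P_T\times\{1\})$ attached to the associated tree $T$. Accordingly I would first prove the equality of real cones
\[
\operatorname{cone}(P_T\times\{1\})=\bigl\{\omega:\ \omega\text{ satisfies non-negativity, the triangle inequalities, and the degree inequalities}\bigr\},
\]
and then intersect both sides with $L_G^{gr}=L_T^{gr}\cap\bigcap_i(x_{e_i'}=x_{e_i''})$, recovering $\tau(G)$ and simultaneously imposing the parity condition. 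Passing from $G$ to $T$ is harmless: cutting a cycle edge turns its endpoints into leaves while they stay trivalent inner vertices, so the inner vertices of $G$, their incident tripods, and hence the triangle and degree inequalities coincide with those of $T$ restricted by the identifications $x_{e_i'}=x_{e_i''}$.

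Necessity, that $\operatorname{cone}(P_T\times\{1\})$ lies inside the inequality region, is the easy direction. Its generators are the points $(v,1)$ where $v$ is a vertex of $P_T$, that is, a $0/1$-labeling obeying parity (a network) of degree one; each such point plainly satisfies all three inequalities at every inner vertex. Because non-negativity and the degree inequalities are linear while the triangle inequalities are stable under addition (the bound $|a_v-b_v|\le c_v$ survives by the triangle inequality for $|\cdot|$, and $c_v\le a_v+b_v$ is linear), every non-negative combination of generators inherits the three conditions.

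The content lies in sufficiency, which I would prove by induction on the number of inner vertices of $T$. The base case is a single tripod: writing $\omega=(a,b,c;d)$ as a combination of $(0,0,0;1),(0,1,1;1),(1,0,1;1),(1,1,0;1)$ determines the coefficients uniquely as $\tfrac12(b+c-a)$, $\tfrac12(a-b+c)$, $\tfrac12(a+b-c)$ and $d-\deg_v(\omega)$, and their non-negativity is precisely the triangle inequalities together with the degree inequality. For the inductive step I would cut an internal edge $e$ into leaf edges $e_1,e_2$, splitting $T$ into smaller trees $T_1,T_2$ sharing the degree $d=\deg(\omega)$. Evaluating the triangle and degree inequalities at an endpoint of $e$ gives $0\le\omega_e\le d$, and the restrictions $\omega|_{T_1},\omega|_{T_2}$ satisfy all inequalities since the inner vertices partition between the two pieces; by induction each restriction lies in its own cone.

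The main obstacle is the recombination step. A generator of $P_{T_i}$ takes the value $0$ or $1$ on $e_i$; on each side the total weight of the value-$1$ generators equals $\omega_e$ and the total weight equals $d$, so the weights of the value-$0$ generators also agree across the two sides. A transportation (matching) argument then pairs each generator on $T_1$ with a generator on $T_2$ carrying the same value on the cut edge, and two such compatible $0/1$-labelings, agreeing on $e$, glue to a single $0/1$-labeling of $T$; parity at the two unchanged endpoints of $e$ is automatic, so the glued labeling is a vertex of $P_T$. This realizes $\omega$ as a non-negative combination of the points $(v,1)$, proving $\omega\in\operatorname{cone}(P_T\times\{1\})$. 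Intersecting with $L_G^{gr}$—the step that finally invokes the parity condition—then places $\omega$ in $\tau(G)$, which completes the argument.
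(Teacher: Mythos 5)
The paper does not prove this statement at all: it is imported verbatim from \cite{Buczynska12} (Definition~2.18 and Lemma~2.23) and used as a black box, so there is no in-paper argument to compare yours against. Your proposal is a correct, self-contained derivation from the definitions given in Section~\ref{section:phylogenetic_semigroups}, and it follows the standard route. The separation into ``lattice condition'' versus ``real cone cut out by the linear/piecewise-linear inequalities'' is the right organizing principle; the reduction from $G$ to $T$ is legitimate because cutting a cycle edge only creates new leaf vertices, so the inner vertices of $G$ and $T$ and their incident tripods coincide under the identifications $x_{e_i'}=x_{e_i''}$. The tripod base case (the four generators $(0,0,0;1),(0,1,1;1),(1,0,1;1),(1,1,0;1)$ are linearly independent, and the unique coefficients are non-negative precisely when the triangle and degree inequalities hold) and the inductive recombination via a transportation/coupling of the two decompositions across the cut edge --- matching mass within the value-$0$ class and within the value-$1$ class, which is possible because both classes carry the same total weight $d-\omega_e$ and $\omega_e$ on either side --- are both sound; this gluing step is in effect a proof that $\tau(T)$ is normal for trivalent trees, which is the same mechanism the paper elsewhere borrows from \cite{DBM12} and \cite{BW07}. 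Two small caveats worth recording: your characterization of the real cone silently assumes $T$ has at least one inner vertex (for a bare edge the degree inequality set is empty and the inequalities no longer force $\omega_e\le\deg(\omega)$), which is harmless here since a trivalent graph's associated tree always has trivalent inner vertices; and in the sufficiency direction you should note explicitly that the parity condition $\omega\in L_G^{gr}$ is what guarantees the glued combination lands back in the semigroup rather than merely in the real cone --- you do say this, but it is the only place the lattice enters and deserves the emphasis you give it.
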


Let $G$ be a graph and $\overline{e}$ an inner edge of $G$. Let $e'$ and $e''$ be new leaf edges obtained by cutting $G$ at $\overline{e}$, as illustrated in Figure~\ref{figure:cutting_an_edge}.
Then $\omega \in \tau(G)$ gives an element $\overline{\omega} \in \tau(G^{\overline{e}})$:
\begin{displaymath}
\overline{\omega}_e=\left\{
\begin{array}{ll}
 \omega _e & \textrm{if } e\notin \{e',e''\},\\
 \omega _{\overline{e}} & \textrm{if } e\in \{e',e''\}.
\end{array}\right.
\end{displaymath}
On the contrary, given $\overline{\omega} \in \tau (G^{\overline{e}})$, it gives an element $\omega \in \tau(G)$ if and only if $\overline{\omega}_{e'}=\overline{\omega}_{e''}$:
\begin{displaymath}
\omega_e=\left\{
\begin{array}{ll}
 \overline{\omega} _e & \textrm{if } e\neq \overline{e},\\
 \overline{\omega} _{e'} & \textrm{if } e= \overline{e}.
\end{array}\right.
\end{displaymath}

\begin{figure}[ht]
\centering
\epsfxsize=240pt\epsfbox{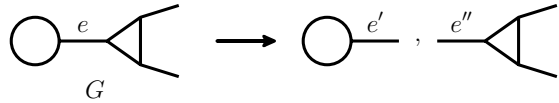} 
\caption{A graph $G^{\overline{e}}$ obtained by cutting an inner edge $\overline{e}$ of $G$}
\label{figure:cutting_an_edge}
\end{figure}

In~\cite{Buczynska12}, a polygon graph $G$ was defined as a graph with $2k$ edges, $k$ of which form the only cycle of $G$ and the remaining $k$ edges are cycle legs. The use of polygon graphs simplifies the study of phylogenetic semigroups on trivalent graphs with first Betti number 1. We generalize this definition to be able to simplify the study of phylogenetic semigroups on any graph.

\begin{definition}
A graph $G$ with first Betti number $g\geq 1$ is called a \textit{multiple polygon graph} if for no edge $e$ we can write $G^e=G'\sqcup G''$ with $G'$ or $G''$ a tree with more than one edge, see Figure~\ref{figure:graphs_with_first_betti_number_2_cases} for examples. A multiple polygon graph is a \textit{polygon graph} if it has first Betti number 1. 
\end{definition}

\begin{lemma}
Given a graph $G$ with first Betti number $g\geq 1$, there exist non-cycle inner edges $e_1,\ldots ,e_k$ of $G$ such that  $G^{e_1,\ldots ,e_k}=G_0\sqcup G_1 \sqcup \ldots \sqcup G_{k}$ where $G_0$ is a multiple polygon graph and $G_1,\ldots ,G_k$ are trees.
\end{lemma}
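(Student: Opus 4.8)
The plan is to induct on the number of edges of $G$, peeling off one tree at a time. If $G$ is already a multiple polygon graph, there is nothing to prove: take $k=0$ and $G_0=G$. Otherwise, by the very definition of multiple polygon graph, there is an internal edge $e$ with $G^e=G'\sqcup G''$ where, say, $G''$ is a tree with more than one edge. This edge $e$ will be the first cut, and $G''$ the first tree peeled off.

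First I would check that such an $e$ is automatically a non-cycle inner edge, as the statement requires. Cutting $e$ disconnects $G$ into the two pieces $G'$ and $G''$, so $e$ is a bridge and hence lies on no cycle; and since cutting is only defined for internal edges, $e$ is inner. Moreover, cutting a bridge destroys no cycle, so the cycles of $G$ coincide with the cycles of $G'\sqcup G''$; as $G''$ is a tree, all of them lie in $G'$. It follows that the first Betti number of $G'$ equals that of $G$ (cutting a bridge preserves the total Betti number and merely splits it across the two resulting components), so $G'$ again has first Betti number $g\geq 1$ and is eligible for the induction hypothesis.

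Next I would run the induction. Since $G''$ has at least two edges and cutting $e$ increases the total edge count by exactly one, $G'$ has strictly fewer edges than $G$. By the induction hypothesis applied to $G'$, there are non-cycle inner edges $f_1,\ldots,f_{k-1}$ of $G'$ with $(G')^{f_1,\ldots,f_{k-1}}=G_0\sqcup G_1\sqcup\cdots\sqcup G_{k-1}$, where $G_0$ is a multiple polygon graph and $G_1,\ldots,G_{k-1}$ are trees. Setting $e_1=e$ and $e_{i+1}=f_i$ then yields $G^{e_1,\ldots,e_k}=G_0\sqcup G_1\sqcup\cdots\sqcup G_{k-1}\sqcup G''$, the desired decomposition with $G''$ as the extra tree. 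The recursion terminates because the edge count strictly decreases at each step and is bounded below. (Throughout I assume $G$ connected, as is standard; cutting a bridge of a connected graph produces exactly two connected components, so $G'$ stays connected and the argument remains within the connected case.)

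The one point demanding care, and the main obstacle, is verifying that the edges $f_1,\ldots,f_{k-1}$ returned by the recursion are still non-cycle inner edges of the \emph{original} graph $G$, and not merely of $G'$. This is exactly where the observation from the second paragraph pays off: cutting the bridge $e$ neither creates nor destroys any cycle and leaves the endpoints of every edge other than $e$ unchanged. Hence an edge is inner (respectively, lies on a cycle) in $G'$ precisely when it is inner (respectively, lies on a cycle) in $G$, so the successive cuts are independent and may be read back as cuts performed on $G$ itself.
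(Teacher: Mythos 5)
Your proof is correct. It rests on the same basic observation as the paper's---the edges to cut are exactly the non-cycle inner edges whose removal splits off a tree with more than one edge---but you organize it as an induction on the number of edges, peeling off one tree at a time, whereas the paper makes a single global choice: it cuts, all at once, the edges that are \emph{maximal} with the tree-splitting property (those closest to the cycles) and leaves every verification to the reader. Your iterative version may therefore produce a finer decomposition (a pendant subtree can be peeled off in several pieces rather than as one maximal tree), which is harmless since the lemma asserts nothing about $k$ or about the trees being maximal. What your write-up buys is precisely the bookkeeping the paper omits: that a tree-splitting edge is automatically a bridge, hence a non-cycle inner edge; that cutting it preserves the first Betti number of the cycle-bearing piece, so the induction hypothesis applies; that the recursion terminates because the edge count drops; and that ``non-cycle inner edge'' is stable under earlier cuts, so the recursively chosen edges are legitimate edges of the original $G$. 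The one caveat is your standing connectedness assumption: the paper is silent on this, and for disconnected $G$ one should simply run your argument on each cycle-bearing component, since the multiple-polygon-graph condition is only sensibly read component by component.
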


\begin{proof}
Choose all non-cycle edges $e$ such that we can write $G^e=G'\sqcup G''$ with $G''$ a tree with more than one edge and $e$ maximal with this property, i.e. there is an edge $\overline{e}$ incident to $e$ such that we cannot write $G^{\overline{e}}={G}'\sqcup {G}''$ with $G'$ or $G''$ a tree with more than one edge.
\end{proof}

\begin{lemma}\label{lemma:graph_union_tree}
Let $G$ be a graph and $\omega \in \tau(G)$. Let $e$ be a non-cycle inner edge such that $G^e=G'\sqcup G''$ with $G''$ a tree. Then any decomposition of $\omega|_{G'}\in \tau(G')$ lifts to a decomposition of $\omega\in \tau(G)$. 
\end{lemma}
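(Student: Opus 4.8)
The plan is to reduce the statement to a purely combinatorial splitting on the tree $G''$, exploiting that phylogenetic semigroups on trees are generated in degree one by networks. First I would fix notation: cutting $G$ at $e$ produces leaf edges $e'$ on $G'$ and $e''$ on $G''$, and $\omega$ restricts to $\omega|_{G'}\in\tau(G')$ and $\omega|_{G''}\in\tau(G'')$ with $(\omega|_{G'})_{e'}=(\omega|_{G''})_{e''}=\omega_e$ and $\deg(\omega|_{G'})=\deg(\omega|_{G''})=\deg(\omega)$. Here I use that an element of $\tau(G^e)=\tau(G'\sqcup G'')$ is precisely a pair consisting of one element of $\tau(G')$ and one of $\tau(G'')$ of equal degree, since the degree coordinate of $L^{gr}$ is shared across connected components.

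Given a decomposition $\omega|_{G'}=\sum_{j=1}^r\eta_j$ in $\tau(G')$, I set $c_j:=(\eta_j)_{e'}$ and $d_j:=\deg(\eta_j)$, so that $\sum_j c_j=\omega_e$ and $\sum_j d_j=\deg(\omega)$. The next step records the compatibility bound $0\le c_j\le d_j$: non-negativity gives $c_j\ge 0$, while passing from $G'$ to its associated tree (which leaves the leaf edge $e'$ and the degree untouched, as $e'$ is a leaf and not a cut cycle edge) together with degree-one generation shows that the label of any leaf edge is at most the degree, whence $c_j\le d_j$.

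The heart of the argument is to produce a matching decomposition on the tree side. Since $G''$ is a tree, I would write $\omega|_{G''}=\sum_{k=1}^{d}N_k$ as a sum of $d=\deg(\omega)$ networks; exactly $\omega_e$ of these $N_k$ carry a $1$ on the leaf edge $e''$. Because $0\le c_j\le d_j$ and the totals $\sum_j c_j=\omega_e$, $\sum_j d_j=d$ match, I can partition $\{1,\dots,d\}$ into blocks $S_1,\dots,S_r$ with $|S_j|=d_j$ so that exactly $c_j$ of the $e''$-using networks lie in $S_j$. Setting $\zeta_j:=\sum_{k\in S_j}N_k\in\tau(G'')$ then yields $\deg(\zeta_j)=d_j$ and $(\zeta_j)_{e''}=c_j$.

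Finally I would glue. For each $j$ the pair $(\eta_j,\zeta_j)$ has equal degree $d_j$ and agrees on the cut, $(\eta_j)_{e'}=c_j=(\zeta_j)_{e''}$, so it defines an element of $\tau(G^e)$ satisfying the matching condition on the distinguished leaves; by the lifting criterion recalled just before the lemma it descends to $\theta_j\in\tau(G)$ with $\theta_j|_{G'}=\eta_j$. Summing over $j$ gives $\sum_j\theta_j=\omega$, the desired lift. I expect the only genuine obstacle to be the simultaneous control of leaf value and degree in the tree splitting; everything else is bookkeeping, and that step is exactly what degree-one (network) generation on trees buys us, with $0\le c_j\le d_j$ as the precise feasibility condition for the distribution.
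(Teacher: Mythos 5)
Your argument is correct and is essentially the proof the paper relies on: the paper simply defers to Buczy\'nska's trivalent-case proof and notes that normality of $\tau(T)$ for trees (degree-one generation by networks) makes it go through in general, and your write-up is precisely that argument spelled out --- decompose $\omega|_{G''}$ into $\deg(\omega)$ networks, use $0\le c_j\le d_j$ to regroup them into blocks matching the degrees and cut-edge values of the given decomposition on $G'$, and glue across the cut. The feasibility bound $c_j\le d_j$ and the block-partition step are exactly the bookkeeping the cited proof performs, so there is nothing to add.
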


\begin{proof}
This lemma is stated for trivalent graphs in~\cite[Lemma~2.31]{Buczynska12}. Since $\tau(T)$ is normal for any tree $T$~\cite[Proposition 18]{DBM12}, then the proof works for the general case exactly the same way as it does for the trivalent case.
\end{proof}

\begin{corollary}\label{lemma:polygon_graph}
Let $G$ be a graph and $\omega \in \tau(G)$. Let $e_1,\ldots ,e_k$ be  non-cycle inner edges such that  $G^{e_1,\ldots ,e_k}=G_0\sqcup G_1 \sqcup \ldots \sqcup G_{k}$ where $G_0$ is a multiple polygon graph and $G_1,\ldots ,G_k$ are trees. Then any decomposition of $\omega|_{G_0}\in \tau(G_0)$ lifts to a decomposition of $\omega\in \tau(G)$. 
\end{corollary}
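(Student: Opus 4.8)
The plan is to reduce the statement to the single-cut Lemma~\ref{lemma:graph_union_tree} by an induction on the number $k$ of cuts. For $k=0$ the graph $G$ equals the multiple polygon graph $G_0$ and there is nothing to prove, so assume $k\geq 1$.

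The structural observation that organizes the induction is that each $e_i$, being a non-cycle edge, is a bridge of $G$, so cutting it increases the number of connected components by exactly one. Hence cutting all of $e_1,\ldots,e_k$ produces precisely the $k+1$ components $G_0,G_1,\ldots,G_k$, and recording which pair of components each $e_i$ joins yields an auxiliary tree on the $k+1$ vertices $G_0,G_1,\ldots,G_k$ with edge set $\{e_1,\ldots,e_k\}$. A tree on at least two vertices has at least two leaves, so at least one leaf is distinct from the vertex $G_0$; after relabelling I may assume it is $G_k$, joined to the rest by $e_k$. Because $G_k$ is a leaf of the auxiliary tree, none of $e_1,\ldots,e_{k-1}$ lies inside it, so cutting the single edge $e_k$ already detaches $G_k$ completely: $G^{e_k}=H\sqcup G_k$, where $G_k$ is a tree and $H$ is the connected graph containing $G_0$ together with the remaining trees $G_1,\ldots,G_{k-1}$.

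With this decomposition in hand I apply Lemma~\ref{lemma:graph_union_tree} to the non-cycle inner edge $e_k$, taking $G'=H$ and $G''=G_k$: every decomposition of $\omega|_{H}\in\tau(H)$ lifts to a decomposition of $\omega\in\tau(G)$. It therefore suffices to decompose $\omega|_{H}$ appropriately. Now $H$ is again an instance of the statement with one fewer cut, since cutting the remaining non-cycle inner edges gives $H^{e_1,\ldots,e_{k-1}}=G_0\sqcup G_1\sqcup\cdots\sqcup G_{k-1}$ with $G_0$ a multiple polygon graph and $G_1,\ldots,G_{k-1}$ trees. Moreover, cutting at distinct edges commutes and singles out the same final component $G_0$, so restriction is compatible with iterated cutting, i.e. $(\omega|_{H})|_{G_0}=\omega|_{G_0}$. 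By the induction hypothesis applied to $H$, any decomposition of $\omega|_{G_0}\in\tau(G_0)$ lifts to a decomposition of $\omega|_{H}\in\tau(H)$.

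Composing the two liftings yields the claim: a decomposition of $\omega|_{G_0}$ first lifts to a decomposition of $\omega|_{H}$ by the induction hypothesis, and then lifts to a decomposition of $\omega$ by Lemma~\ref{lemma:graph_union_tree}. I expect the only genuinely delicate point to be the combinatorial bookkeeping of the second paragraph --- arranging the cuts so that one tree is detached at a time while keeping the remaining object of the same form --- whereas all of the analytic content is carried by the single-cut Lemma~\ref{lemma:graph_union_tree}, which in turn rests on the normality of $\tau(T)$ from~\cite{DBM12}.
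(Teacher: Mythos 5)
Your proof is correct and takes essentially the same route as the paper: the paper's entire proof is the one-line remark that Lemma~\ref{lemma:graph_union_tree} can be applied iteratively, and your induction on $k$ --- peeling off one leaf of the auxiliary component tree at a time so that a single application of Lemma~\ref{lemma:graph_union_tree} detaches one tree while leaving an instance of the same statement with $k-1$ cuts --- is precisely the bookkeeping that makes that iteration rigorous.
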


\begin{proof}
We can use Lemma~\ref{lemma:graph_union_tree} iteratively. 
\end{proof}

\section{Graphs with First Betti Number 1}\label{section:first_betti_number_one}

In this section, we study minimal generating sets of phylogenetic semigroups on graphs with first Betti number 1. Buczy{\'n}ska did this for trivalent graphs~\cite{Buczynska12}. We give a shortened proof of her result, and as a corollary describe the minimal generating set of the phylogenetic semigroup on any graph with first Betti number 1.

Let $G$ be a graph. Networks can be seen as degree one elements of $\tau(G)$. We define $\omega$ corresponding to a network $\Gamma$ in the following way: 
\begin{eqnarray*}
\omega _e=1 & & \quad   \textrm{if } e \textrm{ belongs to } \Gamma,\\
\omega _e=0 & & \quad \textrm{otherwise}.  
\end{eqnarray*}
It follows from the definition of a network that the parity condition is fulfilled for $\omega $ at every inner vertex of $G$. Hence $\omega \in \tau(G)$. We will often use the notion network for the corresponding labeling $\omega \in \tau(G)$.

It has been shown for various classes of graphs that networks are in one-to-one correspondence with degree one elements of a phylogenetic semigroup~\cite[Lemma~2.3]{BW07}, ~\cite[Lemma~2.26]{Buczynska12}. For an arbitrary tree this was stated in~\cite[Section 2]{BBKM11}, but no proof was given. We did not find proofs for arbitrary trees or graphs in the literature and therefore will present them here.

\begin{lemma}\label{lemma:networks_on_trees}
Let $T$ be a tree. There is one-to-one correspondence between networks and degree one elements of $\tau(T)$.
\end{lemma}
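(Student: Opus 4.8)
The plan is to pin down the degree one elements of $\tau(T)$ explicitly and then match them with networks by a short combinatorial argument. First I would unwind the definition $\tau(T)=\mathrm{cone}(P_T\times\{1\})\cap L_T^{gr}$ in degree one. Since $\deg$ is the projection onto the last summand of $L_T^{gr}=L_T\oplus\mathbb{Z}$, a degree one element is a point $(x,1)$ that lies in $\mathrm{cone}(P_T\times\{1\})$, which forces $x\in P_T$, and in $L_T^{gr}$, which forces $x\in L_T$. Hence the degree one elements of $\tau(T)$ are exactly the points of $P_T\cap L_T$, placed at height one.

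The key observation is then that $P_T=\mathrm{conv}\{x\in L_T: x_e\in\{0,1\}\}\subseteq[0,1]^E$, being a convex hull of $\{0,1\}$-vectors. Consequently every integer point of $P_T$ has all coordinates in $\{0,1\}$, so $P_T\cap L_T$ consists precisely of the $\{0,1\}$-valued vectors lying in $L_T$, i.e. the vertices of $P_T$. Concretely, these are the labelings $\omega$ with $\omega_e\in\{0,1\}$ for every edge $e$ such that $\sum_{v\in e}\omega_e$ is even at each inner vertex $v$. This reduces the statement to a purely combinatorial correspondence between such $\{0,1\}$-parity labelings and networks.

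To establish that correspondence I would read a $\{0,1\}$-labeling as the indicator function of an edge subset $S$: the parity condition at an inner vertex $v$ says exactly that $v$ has even degree in $S$, while leaves automatically have degree $0$ or $1$. The forward direction, that a network produces such an $S$, is already recorded in the text preceding the lemma (each path contributes $0$ or $2$ at an inner vertex). For the reverse direction, given such an $S$ I pair up the edges of $S$ meeting at each inner vertex and trace out the resulting walks; because $T$ is a tree and therefore has no cycles, every maximal walk must terminate at a leaf, which exhibits $S$ as a disjoint union of paths between leaves, that is, a network. Identifying a network with its edge set (equivalently, with its labeling $\omega$) then yields a bijection between networks and $P_T\cap L_T$, and hence with the degree one elements of $\tau(T)$.

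I expect the combinatorial decomposition step to be routine once the tree structure is invoked, so the real care lies in two places. The substantive reduction is $P_T\cap L_T=\{x\in L_T: x_e\in\{0,1\}\}$, which hinges entirely on the containment $P_T\subseteq[0,1]^E$ and the fact that each such $\{0,1\}$-point, being a vertex of the cube, is automatically a vertex of $P_T$. The second subtlety, for trees having inner vertices of degree $\geq 4$, is that \emph{disjoint union of paths} must be understood as an edge-disjoint union (paths may share a vertex), so that a network is determined by and identified with its edge set; this is precisely what makes the assignment network $\mapsto\omega$ well defined and injective, and thus a genuine one-to-one correspondence.
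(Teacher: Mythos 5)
Your proof is correct, but it takes a genuinely different route from the paper's. The paper argues by induction on the number of inner vertices: the base case is the claw tree, and the induction step cuts an inner edge $e$, writes $T^e=T_1\sqcup T_2$, applies the hypothesis to each piece, and glues the two path systems back together according to whether $\omega_e$ is $0$ or $1$. You instead give a direct, non-inductive argument in two steps: first the polytope reduction $P_T\cap L_T=\{x\in L_T: x_e\in\{0,1\}\}$, which follows from $P_T\subseteq[0,1]^E$ together with the fact that the $\{0,1\}$-points of $L_T$ are among the generators of the convex hull (the extra remark that they are vertices of $P_T$ is true but not needed); second, a standard transition-system argument pairing the edges of the support at each even-degree inner vertex and tracing maximal trails, which must be open and leaf-to-leaf since a tree has no cycles, and which are genuine paths since a closed or self-intersecting trail would force a cycle. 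Both proofs are sound. Your version makes explicit the lattice-point fact ($\{0,1\}$-valuedness of degree one elements) that the paper's induction uses only implicitly, and it isolates the one point where care is genuinely required for non-trivalent trees, namely that at an inner vertex of degree $\geq 4$ the paths need only be edge-disjoint, so a network must be identified with its edge set for the correspondence to be a bijection. The paper's inductive cut-and-glue argument, on the other hand, is the template reused throughout the rest of the paper (Lemma 3.2, Lemma 3.5, etc.), so it buys uniformity of method at the cost of being slightly less self-contained about why $\omega_e\in\{0,1\}$.
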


\begin{proof}
We will prove the lemma by induction on the number of inner vertices of $T$.

\noindent Base case: The statement of the lemma clearly holds for claw trees. 

\noindent Induction step: Let $T$ be a tree with $n>1$ inner vertices and $\omega \in \tau(T)$ a degree one labeling. If $T$ has more than one connected component, then by induction $\omega$ restricted to any connected component is a disjoint union of paths. Hence $\omega$ is a disjoint union of paths. 

If $T$ has one connected component, let $e$ be an inner edge, $e_1,e_2$ new leaf edges obtained by cutting $T$ at $e$ and write $T^e=T_1\sqcup T_2$. Then $\omega $ restricted to either tree is a disjoint union of paths. If $\omega_e=0$, then $\omega$ is the disjoint union of exactly the same paths. If $\omega _e=1$, then the path of $T_1$ containing $e_1$ and the path of $T_2$ containing $e_2$ are combined to one path of $T$ containing $e$. Hence $\omega $ is a disjoint union of paths. 
\end{proof}

\begin{lemma}\label{lemma:networks_on_graphs}
Let $G$ be a graph. There is one-to-one correspondence between networks and degree one elements of $\tau(G)$.
\end{lemma}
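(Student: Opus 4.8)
The plan is to reduce the statement for graphs to the already-proven statement for trees (Lemma~\ref{lemma:networks_on_trees}) by passing to the associated tree $T$ and tracking what the distinguished-pair identifications do to networks. Recall that $\tau(G)=\tau(T)\cap\bigcap_i(x_{e_i'}=x_{e_i''})$, where $T$ is the tree associated to $G$ with distinguished pairs of leaves $\{(e_i',e_i'')\}$ coming from cutting the $g$ cycle edges. A degree one element $\omega\in\tau(G)$ is in particular a degree one element of $\tau(T)$, so by Lemma~\ref{lemma:networks_on_trees} it corresponds to a network $\Gamma_T$ on $T$, i.e. a disjoint union of paths. The extra condition $\omega_{e_i'}=\omega_{e_i''}$ means that for each distinguished pair the two leaf edges carry the same label, either both $0$ or both $1$.

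First I would establish the easy direction: every network $\Gamma$ on $G$ gives a degree one element of $\tau(G)$. This is essentially already observed in the paragraph preceding Lemma~\ref{lemma:networks_on_trees}, where it is noted that the labeling $\omega$ attached to a network satisfies the parity condition and hence lies in $\tau(G)$; its degree is one because each connected path contributes degree one and, more to the point, one checks $\deg(\omega)=1$ directly from the construction. So the content is the reverse direction and the bijectivity.

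For the reverse direction I would argue as follows. Given a degree one $\omega\in\tau(G)$, view it on $T$ and obtain the path decomposition $\Gamma_T$. I then glue back the cut edges: for each distinguished pair $(e_i',e_i'')$, the equality $\omega_{e_i'}=\omega_{e_i''}$ forces a consistent label on the reglued cycle edge $\overline{e_i}$, so the labeling descends to a genuine $\{0,1\}$-labeling of the edges of $G$. The key point is that the edges of $G$ labeled $1$ form a disjoint union of paths and cycles in $G$, i.e.\ a network on $G$: at every inner vertex the parity condition and the triangle inequalities (from Lemma~\ref{def_cone_of_G}) with total degree one force the local picture to be either no incident $1$-edges or exactly two incident $1$-edges, so the $1$-labeled subgraph has all vertices of degree $0$ or $2$ and therefore decomposes into paths (ending at leaves) and cycles. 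When a path of $\Gamma_T$ has its two endpoints on a distinguished pair $(e_i',e_i'')$ carrying label $1$, regluing closes it into a cycle of $G$; paths with endpoints at genuine leaves of $G$ stay paths. This is exactly the definition of a network on $G$, so $\omega$ corresponds to a network.

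The main obstacle I expect is the bookkeeping around regluing when several distinguished pairs interact, in particular ensuring that the gluing is well defined and that one really recovers paths and cycles (rather than some vertex acquiring degree higher than $2$). The cleanest way to handle this is to run the induction on the first Betti number $g$ using the cutting operation already set up in the excerpt: the correspondence $\omega\leftrightarrow\overline{\omega}$ between $\tau(G)$ and $\tau(G^{\overline{e}})$ under the constraint $\overline{\omega}_{e'}=\overline{\omega}_{e''}$ is exactly the tool needed. In the inductive step I would cut one cycle edge $\overline{e}$ to pass from $G$ to a graph $G^{\overline{e}}$ of Betti number $g-1$, apply the inductive hypothesis to get a network there, and observe that the constraint $\overline{\omega}_{e'}=\overline{\omega}_{e''}$ either leaves a path open at two leaves (label $0$) or joins the path-end at $e'$ to the path-end at $e''$ into a single path or cycle of $G$ (label $1$); the base case $g=0$ is Lemma~\ref{lemma:networks_on_trees}. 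Injectivity and surjectivity of the correspondence are then immediate, since the labeling determines and is determined by the set of $1$-labeled edges.
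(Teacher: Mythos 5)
Your proposal is correct and, in its final form, is essentially the paper's own argument: induction on the first Betti number, with Lemma~\ref{lemma:networks_on_trees} as the base case and a single cycle-edge cut in the inductive step, checking whether the reglued edge closes a path into a cycle, joins two paths, or does nothing. The preliminary "cut all $g$ edges at once and pass to the associated tree $T$" sketch is a viable alternative but, as you note yourself, requires more bookkeeping, and the paper avoids it for the same reason.
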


\begin{proof}
We will prove the lemma by induction on first Betti number $g$ of $G$.

\noindent Base case: The statement of the lemma holds for trees by Lemma~\ref{lemma:networks_on_trees}.

\noindent Induction step: Let $G$ be a graph with first Betti number $g>1$ and $\omega \in \tau(G)$ a degree one labeling. Let $e$ be a cycle edge of $G$ and $e_1,e_2$ new leaf edges obtained by cutting $G$ at $e$. The graph $G^e$ has first Betti number $g-1$. Then $\omega $ gives $\overline{\omega}\in G^e$ that is a disjoint union of paths containing both $e_1,e_2$ or neither of them. If $\omega_e=0$, then $\omega$ is the disjoint union of exactly the same paths. If $\omega _e=1$, then there are two possibilities. Either there is a path in $\overline{\omega}$ with first edge $e_1$ and last edge $e_2$ which lifts to a cycle in $\omega$. Or there is a path in $\overline{\omega}$ with first edge $e'$ and last edge $e_1$, and another path in $\overline{\omega}$ with first edge $e_2$ and last edge $e''$, where $e',e''$ are leaf edges. These paths in $\overline{\omega}$ lift to a single path in $\omega$ with first edge $e$ and last edge $e'$ in $G$. 
\end{proof}

\begin{corollary}
Let $G$ be a graph. All networks are included in the minimal generating set of $\tau(G)$. 
\end{corollary}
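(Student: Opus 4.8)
The plan is to exploit the grading by degree. By the terminology fixed just before Lemma~\ref{def_cone_of_G}, the minimal generators of $\tau(G)$ are precisely its indecomposable elements, so it suffices to show that every network is indecomposable. First I would invoke Lemma~\ref{lemma:networks_on_graphs} to identify each network with its corresponding degree one element $\omega\in\tau(G)$; this reduces the claim to showing that every degree one element of $\tau(G)$ is indecomposable.

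Next I would record two facts about the degree map $\deg:L_G^{gr}\to\mathbb{Z}$. It is additive, so $\deg(\omega_1+\omega_2)=\deg(\omega_1)+\deg(\omega_2)$ for $\omega_1,\omega_2\in\tau(G)$, since it is the restriction of the projection onto the last summand of $L_T^{gr}=L_T\oplus\mathbb{Z}$. Moreover, on $\tau(G)$ the degree is non-negative and vanishes only at the zero labeling: indeed $\tau(T)=\textrm{cone}(P_T\times\{1\})\cap L_T^{gr}$, so every element has the form $\lambda(p,1)$ with $\lambda\ge 0$ and degree $\lambda$, whence degree zero forces $\lambda=0$, i.e. the origin; intersecting with the subspaces $x_{e_i'}=x_{e_i''}$ to obtain $\tau(G)$ does not affect this. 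Consequently every nonzero element of $\tau(G)$ has degree at least one.

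With these in hand the conclusion is immediate: if a network $\omega$ of degree one could be written as $\omega=\omega_1+\omega_2$ with $\omega_1,\omega_2\in\tau(G)$ both nonzero, then additivity would give $1=\deg(\omega)=\deg(\omega_1)+\deg(\omega_2)\ge 2$, a contradiction. Hence $\omega$ admits no nontrivial decomposition, so it is indecomposable and therefore belongs to the (unique) minimal generating set of $\tau(G)$.

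I expect no serious obstacle here; the argument is a one-line grading computation once the setup is in place. The only point deserving genuine care is the claim that the sole degree zero element is the origin, equivalently that $\tau(G)$ is positively graded. This is exactly what prevents the trivial identity $\omega=\omega+0$ from masquerading as a real decomposition, and it is what makes the degree bound $\ge 2$ valid.
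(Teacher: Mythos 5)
Your argument is correct and is essentially the paper's own proof: the paper simply cites the general fact that in a graded affine semigroup all degree one elements are minimal generators, which is exactly the additivity-plus-positivity computation you carry out explicitly after identifying networks with degree one elements via Lemma~\ref{lemma:networks_on_graphs}. Your extra care in checking that the only degree zero element of $\tau(G)$ is the origin is the right point to verify, and it holds for the reason you give.
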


\begin{proof}
For any graded affine semigroup $\mathbb{N}\mathcal{A}$ all degree one minimal generators are included in the minimal generating set of $\mathbb{N}\mathcal{A}$. 
\end{proof}

\begin{theorem}[\cite{Buczynska12}, Theorem~2.29]\label{thm:buczynska}
Let $G$ be a trivalent graph with first Betti number 1 and $\omega \in \tau(G)$. Then $\omega$ is a minimal generator of $\tau(G)$ if and only if it satisfies one of the following conditions:
\begin{itemize}
 \item $\omega$ is a network, or

\item $\omega$ has degree two, and satisfies the following three conditions
\begin{itemize}
\item[(i)] $\omega _e=1$, for all cycle edges $e$,

\item[(ii)] $\omega _e=2$, for an odd number of cycle legs,

\item[(iii)] $\omega _e=0$, for the remaining cycle legs.
\end{itemize}
\end{itemize}

\end{theorem}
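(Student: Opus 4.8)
The plan is to reduce to the case of a polygon graph and then settle the two implications separately, with the parity of the number of legs labelled $2$ as the organising principle. By Corollary~\ref{lemma:polygon_graph} a decomposition of $\omega$ restricted to the polygon graph $G_0$ sitting inside $G$ lifts to a decomposition of $\omega$; since the conditions (i)--(iii) concern only cycle edges and cycle legs, and since the trees hanging off the cycle are generated by networks via Lemma~\ref{lemma:networks_on_graphs}, it suffices to treat the case where $G$ itself is a polygon graph. I will write $f_1,\dots,f_k$ for the cycle edges in cyclic order and $g_i$ for the leg at the vertex $v_i$ incident to $f_{i-1},f_i,g_i$ (indices mod $k$), and I will use throughout the description of $\tau(G)$ by the parity, non-negativity, triangle and degree conditions of Lemma~\ref{def_cone_of_G}.

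First I would check that the elements listed in the theorem lie in $\tau(G)$ and are indecomposable. A degree-two labelling $\omega$ with $\omega_{f_i}=1$ for all $i$ and $\omega_{g_i}\in\{0,2\}$ satisfies all four conditions at each $v_i$ by direct inspection, so $\omega\in\tau(G)$. For indecomposability, suppose $\omega=\omega'+\omega''$ with both summands nonzero, hence both of degree one and thus networks by Lemma~\ref{lemma:networks_on_graphs}, each taking values in $\{0,1\}$ with even network-degree at every inner vertex. Tracking $\omega'$ around the cycle, the value $\omega'_{f_i}\in\{0,1\}$ is forced to stay constant across a vertex with $\omega_{g_i}=0$ and to flip across a vertex with $\omega_{g_i}=2$; returning to the start after one full loop forces the total number of flips, i.e.\ the number of legs labelled $2$, to be even, contradicting (ii). So $\omega$ is indecomposable, and the same flip count read in reverse will supply the converse below.

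Next I would bound the degree of a minimal generator by cutting a cycle edge $\overline e$ to obtain the associated tree $T$ with distinguished leaves $e',e''$, where $\tau(G)=\tau(T)\cap(x_{e'}=x_{e''})$. For $\omega\in\tau(G)$ of degree $d$ the induced $\overline\omega\in\tau(T)$ decomposes into $d$ networks on $T$, since $\tau(T)$ is generated in degree one. Sorting these networks by their pair of values on $(e',e'')$, the types $(0,0)$ and $(1,1)$ descend to networks on $G$, while the numbers of type $(1,0)$ and type $(0,1)$ networks coincide (both equal $\omega_{\overline e}$ minus the number of $(1,1)$ networks) and so may be paired, each pair summing to a degree-two element of $\tau(G)$. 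Hence every $\omega$ is a sum of elements of degree at most two, and a minimal generator is either a network or has degree two.

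Finally I would classify the indecomposable degree-two elements. Cutting at a cycle edge $\overline e$ and writing $\overline\omega=N_1+N_2$ as a sum of two tree-networks, set $a=\omega_{\overline e}$: if $a=0$ both $N_j$ descend to networks on $G$, and if $a=2$ then both $N_j$ take the value $1$ on $e'$ and on $e''$ and again descend, so in either case $\omega$ decomposes. Thus an indecomposable $\omega$ has $\omega_{f_i}=1$ for every cycle edge, which is (i), whereupon the parity and triangle conditions at each $v_i$ force $\omega_{g_i}\in\{0,2\}$, which is (iii). If the number of legs labelled $2$ were even, I would construct an explicit splitting $\omega=N_1+N_2$ into two networks by assigning values in $\{0,1\}$ to the cycle edges that flip exactly at the legs labelled $2$ and are constant elsewhere — consistent around the loop precisely because the number of flips is even — so indecomposability forces this number to be odd, which is (ii). The main obstacle, and the conceptual core of the argument, is exactly this parity-of-flips bookkeeping around the cycle: it is simultaneously why the special elements do not decompose and why every other degree-two element does, and the delicate point is matching the two directions, including checking that the summands produced in the decomposing direction are genuinely nonzero.
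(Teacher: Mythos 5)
Your proposal is correct and follows essentially the same route as the paper: reduce to a polygon graph via Corollary~\ref{lemma:polygon_graph}, cut a cycle edge and pair the tree-networks by their values on the two new leaves to bound minimal generators in degree two, use the $\omega_{\overline e}\in\{0,\deg\omega\}$ criterion plus parity to force conditions (i) and (iii), and a parity argument around the cycle for (ii). The only cosmetic difference is that you phrase the odd/even dichotomy as counting flips of the cycle-edge value around the loop, while the paper counts leaf endpoints of a network and exhibits the alternating paths $P_1\cup P_3\cup\cdots$ and $P_2\cup P_4\cup\cdots$ explicitly; these are the same argument.
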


We give a shortened proof of this theorem. The following lemma will be an important part of it.

\begin{lemma}\label{lemma:decomposition_0_or_d}
Let $G$ be a graph with first Betti number 1. Let $\omega \in \tau(G)$ be of degree $d$. If there is a cycle edge $e$ with $\omega _e=0$ or $\omega _e=d$, then $\omega $ decomposes as a sum of degree one elements.
\end{lemma}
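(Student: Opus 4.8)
The plan is to cut $G$ at the cycle edge $e$ and reduce the statement to the known fact that phylogenetic semigroups on trees are generated in degree one. Since $G$ has first Betti number $1$, cutting the cycle edge $e$ produces a graph $T=G^e$ with first Betti number $0$, i.e.\ a tree, together with two new leaf edges $e_1,e_2$. By the correspondence described just before Figure~\ref{figure:cutting_an_edge}, the element $\omega$ yields $\overline{\omega}\in\tau(T)$ with $\overline{\omega}_{e_1}=\overline{\omega}_{e_2}=\omega_e$, and conversely any element of $\tau(T)$ whose labels on $e_1$ and $e_2$ coincide lifts back to an element of $\tau(G)$.

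Next I would decompose $\overline{\omega}$ on the tree. Since the phylogenetic semigroup on a tree is generated by its degree one elements and $\overline{\omega}$ has degree $d$, I can write $\overline{\omega}=\sum_{i=1}^{d}\overline{\omega}^{(i)}$ with each $\overline{\omega}^{(i)}$ of degree one (the count is exactly $d$ because the degree map is additive). By Lemma~\ref{lemma:networks_on_trees} each $\overline{\omega}^{(i)}$ is a network, so all its coordinates lie in $\{0,1\}$; in particular $\overline{\omega}^{(i)}_{e_1},\overline{\omega}^{(i)}_{e_2}\in\{0,1\}$.

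The decisive step is then a simple extremal observation that forces each summand to lift. If $\omega_e=0$, then $\sum_i \overline{\omega}^{(i)}_{e_1}=0$ with every term in $\{0,1\}$, hence $\overline{\omega}^{(i)}_{e_1}=0$ for all $i$, and likewise $\overline{\omega}^{(i)}_{e_2}=0$. If $\omega_e=d$, then $\sum_i \overline{\omega}^{(i)}_{e_1}=d$ is a sum of $d$ terms each at most $1$, forcing $\overline{\omega}^{(i)}_{e_1}=1$ for all $i$, and likewise $\overline{\omega}^{(i)}_{e_2}=1$. In either case every summand satisfies $\overline{\omega}^{(i)}_{e_1}=\overline{\omega}^{(i)}_{e_2}$, so each lifts to a degree one element $\omega^{(i)}\in\tau(G)$, and $\omega=\sum_{i=1}^{d}\omega^{(i)}$ is the desired decomposition into degree one elements.

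The main obstacle is precisely guaranteeing that a degree-one decomposition of $\overline{\omega}$ on the tree can be chosen compatibly with the gluing constraint $x_{e_1}=x_{e_2}$; for a generic value of $\omega_e$ an arbitrary tree decomposition need not respect this constraint, which is exactly why the hypothesis $\omega_e\in\{0,d\}$ is essential. The extremal values are what pin down every summand simultaneously on $e_1$ and on $e_2$, so once the degree-one pieces are known to have $\{0,1\}$ coordinates, no further combinatorial work is needed.
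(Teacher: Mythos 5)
Your proof is correct and follows essentially the same route as the paper: cut at the cycle edge to obtain a tree, decompose there into degree one networks, and use the extremal values $\omega_e\in\{0,d\}$ to force $\overline{\omega}^{(i)}_{e_1}=\overline{\omega}^{(i)}_{e_2}$ for every summand so that each lifts back to $\tau(G)$. The only difference is that you spell out the extremal counting argument that the paper leaves implicit when it asserts $(\overline{\omega_i})_{e_1}=(\overline{\omega_i})_{e_2}$.
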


\begin{proof}
Let $e$ be a cycle edge and $e_1,e_2$ new leaf edges obtained by cutting $G$ at $e$. Notice that $G^e$ is a tree. Then $\omega$ gives $\overline{\omega} \in \tau (G^e)$ that decomposes into degree one elements $\overline{\omega} = \overline{\omega _1}+\ldots +\overline{\omega _d}$. Since $(\overline{\omega _i})_{e_1}=(\overline{\omega _i})_{e_2}$ for all $i$, the decomposition $\overline{\omega} = \overline{\omega _1}+\ldots +\overline{\omega _d}$ gives a decomposition $\omega = \omega _1+\ldots +\omega _d$ of $\omega\in \tau(G)$.
\end{proof}

\begin{proof}[Proof of Theorem~\ref{thm:buczynska}]
By Corollary~\ref{lemma:polygon_graph} we can assume that $G$ is a trivalent polygon graph. First we prove that any minimal generator of $\tau(G)$ has degree at most two. Let $\omega \in \tau(G)$ be of degree $d$. Let $e$ be a cycle edge and $e_1,e_2$ new leaf edges obtained by cutting $G$ at $e$. Then $\omega$ gives $\overline{\omega} \in \tau (G^e)$ that decomposes as a sum of degree one elements $\overline{\omega} = \overline{\omega _1}+\ldots +\overline{\omega _d}$. If $(\overline{\omega _i})_{e_1}=(\overline{\omega _i})_{e_2}$ then $\overline{\omega _i}$ gives an element $\omega_i\in \tau (G)$. Otherwise there exists $j$ such that $(\overline{\omega _i})_{e_1}=(\overline{\omega _j})_{e_2}$ and $(\overline{\omega _j})_{e_1}=(\overline{\omega _i})_{e_2}$, because $\overline{\omega}_{e_1}=\overline{\omega}_{e_2}$. Thus $\overline{\omega _i}+\overline{\omega _j}$ gives a degree two element $\omega_i+\omega_j \in \tau (G)$.

Degree one elements of $\tau (G)$ are networks by Corollary~\ref{lemma:networks_on_graphs}. By Lemma~\ref{lemma:decomposition_0_or_d}, all degree two indecomposable elements $\omega $ have $\omega _e=1$ on all cycle edges $e$. Since $G$ is a trivalent graph, we have $\omega_e\in \{0,2\}$ for all cycle legs because of the parity condition. Assume $\omega _e=2$ for an even number of cycle legs $e_1,\ldots e_{2k}$ in clockwise order. Denote by $P_i$ the path starting at $e_i$ and ending at $e_{i+1}$ (at $e_0$ for $i=2k$). Then $\omega$ decomposes as the sum of networks $P_1\cup P_3 \cup \ldots \cup P_{2k-1}$ and $P_2 \cup P_4 \cup \ldots \cup P_{2k}$. Hence for $\omega$ indecomposable $\omega _e=2$ for an odd number of cycle legs.

Conversely, assume that $\omega \in \tau(G)$ has degree two and fulfills $(i),(ii),(iii)$. Suppose $\omega=\omega _1 + \omega _2$, where $\omega _1,\omega _2$ are networks. For all cycle legs $e$ with $\omega_e=2$ we have $(\omega _i)_e=1$, since $(\omega _i)_e\leq 1$ for all edges $e$. Hence $(\omega _i)_e=1$ for odd number of leaves of $G$. But this is contradiction to the fact that $\omega _i$ is a network. 
\end{proof}

\begin{remark}
We know from~\cite{Buczynska12,BBKM11} that a minimal generator of the phylogenetic semigroup on a graph with first Betti number 1 has degree at most two. We showed this above to give a simple and self-containing proof.
\end{remark}

\begin{corollary}\label{cor:first_betti_number_1}
Let $G$ be a graph with first Betti number 1 and $\omega \in \tau(G)$. Then $\omega$ is a minimal generator of $\tau(G)$ if and only if it satisfies one of the following conditions:
\begin{itemize}
 \item $\omega$  is a network, or
\item $\omega$ has degree two, and satisfies the following three conditions
\begin{itemize}
\item[(i)] $\omega_e=1$ for all cycle edges $e$,
\item[(ii)] $\omega _e=2$, for an odd number of cycle legs,
\item[(iii)] $\omega _e=0$, for the remaining cycle legs.
\end{itemize}
\end{itemize} 
\end{corollary}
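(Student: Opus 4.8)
The plan is to follow the structure of the proof of Theorem~\ref{thm:buczynska}, isolating the one place where trivalence was genuinely used and replacing it by an argument valid for arbitrary valence. First I would invoke Corollary~\ref{lemma:polygon_graph} to reduce to the case where $G$ is a polygon graph $G_0$; unlike in the trivalent setting, such a $G_0$ need not be trivalent, since a cycle vertex may carry several legs or none. The two opening steps of the earlier proof transfer verbatim: cutting a cycle edge and pairing the degree-one pieces of the resulting tree labeling shows every minimal generator has degree at most two (this uses only that $G_0^e$ is a tree, not trivalence), degree-one generators are networks by Lemma~\ref{lemma:networks_on_graphs}, and for a degree-two indecomposable $\omega$ Lemma~\ref{lemma:decomposition_0_or_d} forces $\omega_e=1$ on every cycle edge, which is condition (i).

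The key new input is the behaviour of the legs. I would first record that the degree inequality $\deg(\omega)\ge \deg_v(\omega)$ continues to hold at every inner vertex of a general graph: any network meets a vertex in at most two edges, so each of the two networks summing to the tree lift of $\omega$ contributes at most $1$ to $\deg_v$. Combined with $\omega_e=1$ on the two cycle edges at a cycle vertex $v$, this bounds the sum of the leg labels at $v$ by $2$, and the parity condition makes that sum even. Hence every cycle vertex is of exactly one of three types: type N (all incident legs labelled $0$), type S (exactly two legs labelled $1$), or type T (exactly one leg labelled $2$). In the trivalent case only types N and T occur, which is precisely why the single-leg parity argument sufficed there; type S is the genuinely new phenomenon.

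Next I would translate ``$\omega=N_1+N_2$ with $N_1,N_2$ networks'' into a combinatorial colouring problem. Since every cycle edge has label $1$, it lies in exactly one of $N_1,N_2$, so a decomposition amounts to a $2$-colouring of the cycle edges; checking the even-degree (parity) condition of a network at each vertex shows that the colour is forced to stay constant across a type-N vertex, is forced to switch across a type-T vertex (the doubled leg lies in both networks, so the two cycle edges must fall into different networks), and may freely switch or not across a type-S vertex. A consistent colouring of the cycle exists if and only if the total number of switches is even. Therefore, if some type-S vertex is present one may absorb a parity defect there and always decompose $\omega$, so an indecomposable $\omega$ has no type-S vertex, i.e.\ all legs are labelled $0$ or $2$; and with only types N and T the number of switches equals the number of type-T vertices, so $\omega$ decomposes exactly when that number is even. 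This yields conditions (ii) and (iii).

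For the converse I would give the direct counting argument generalizing the trivalent one: if $\omega$ satisfies (i)--(iii) and $\omega=N_1+N_2$, then each $N_i$ carries label $1$ on every one of the oddly many cycle legs labelled $2$ and on no other leaf edge of $G_0$, so $N_i$ covers an odd number of leaves, which is impossible since a network is a disjoint union of paths and hence meets an even number of leaves. The step I expect to be the main obstacle is the analysis of type-S vertices: verifying that a cycle vertex bearing two unit legs always lets one rebalance the colouring, and packaging the three vertex types cleanly enough that the forced, optional, and forbidden switches produce a clean necessary-and-sufficient criterion.
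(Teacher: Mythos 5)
Your overall strategy --- reduce to a generalized polygon graph via Corollary~\ref{lemma:polygon_graph} and run a switch-parity argument around the cycle --- is workable in principle, but it rests on a false lemma. You claim the degree inequality $\deg(\omega)\ge\deg_v(\omega)$ persists at every inner vertex of a general graph because ``any network meets a vertex in at most two edges.'' In this paper a network is only an \emph{edge}-disjoint union of paths: this is forced by Lemma~\ref{lemma:networks_on_trees}, since the degree one elements of $\tau(T)$ for a claw tree are exactly the even subsets of edges, so a network may use $4,6,\dots$ edges at a high-valence vertex. Consequently a degree two element with $\omega_e=1$ on both cycle edges at a cycle vertex $v$ can have leg labels at $v$ summing to more than $2$. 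Concretely, take a triangle on vertices $u,v,w$, give $v$ three legs labelled $1,1,2$, give $u$ and $w$ one leg each labelled $0$, and label all cycle edges $1$; cutting the edge between $u$ and $w$ one checks that the resulting tree labeling is a sum of two networks, so this $\omega$ lies in $\tau(G)$, yet $\deg_v(\omega)=3>2=\deg(\omega)$. Hence your trichotomy N/S/T of cycle vertices is not exhaustive: vertices carrying legs labelled $1,1,2$, or $2,2$, or $1,1,1,1$, etc.\ occur and are not covered, and the forced/optional/forbidden-switch bookkeeping that drives both directions of your argument is therefore incomplete. The framework can be repaired --- the local parity constraint shows a switch is forced precisely at a vertex with an odd number of legs labelled $2$ and none labelled $1$, forbidden at a vertex with an even number of legs labelled $2$ and none labelled $1$, and optional as soon as some leg is labelled $1$ --- but supplying and verifying that fuller local analysis is exactly the substance that is missing.

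For comparison, the paper sidesteps all of this: it splits each high-valence vertex into trivalent ones in a way that creates no new cycle legs, notes that $\tau(G)$ is the coordinate projection of $\tau(G')$ for the resulting trivalent graph $G'$ (so an indecomposable element of $\tau(G)$ lifts to an indecomposable element of $\tau(G')$), and pulls conditions (i)--(iii) back from Theorem~\ref{thm:buczynska}; the degree bound is simply quoted from the literature. Your converse direction (each summand of a putative decomposition would contain an odd number of leaf edges, contradicting that a network meets an even number of leaves) is correct and is essentially the paper's own argument.
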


\begin{proof}
Let $G'$ be a trivalent graph constructed from $G$ in the following way: Replace all vertices $v$ with valency higher than three by two new vertices $v'$ and $v''$ together with a new edge between them, let two edges incident to $v$ be incident to $v'$ and the rest of the edges incident to $v$ be incident to $v''$. Moreover, if $v$ is on the cycle, let one cycle edge incident to $v$ be incident to $v'$ and let the other cycle edge incident to $v$ be incident to $v''$. This assures that we do not add any cycle legs. After a finite number of replacements we get a trivalent graph $G'$. As in~\cite[Lemma 4.1]{BBKM11}, $\tau(G)$ is the coordinate projection of $\tau(G')$ that forgets coordinates corresponding to new edges. In particular, if $\omega'\in\tau(G')$ is decomposable, then its projection in $\tau(G)$ is also decomposable.

By~\cite{BBKM11}, any minimal generator of $\tau(G)$ has degree at most two. Degree one elements are networks. We are left with describing degree two indecomposable elements of $\tau(G)$. A degree two indecomposable element $\omega \in \tau(G)$ is the coordinate projection of a degree two indecomposable element of $\tau(G')$. Since all cycle legs of $G'$ are also cycle legs of $G$, then by Theorem~\ref{thm:buczynska} the conditions $(i),(ii),(iii)$ are fulfilled for $\omega$.

Conversely, assume that the conditions $(i),(ii),(iii)$ are fulfilled. Suppose $\omega=\omega _1 + \omega _2$, where $\omega _1,\omega _2$ are networks. For all cycle legs $e$ with $\omega_e=2$ we have $(\omega _i)_e=1$, since $(\omega _i)_e\leq 1$ for all edges $e$. Hence $(\omega _i)_e=1$ for odd number of leaves of $G'$. But this is a contradiction to the fact that $\omega _i$ is a network. 
\end{proof}

\section{Degree Two Minimal Generators}\label{section:degree_two_generators}

In this section, we describe degree two indecomposable labelings for any trivalent graph $G$.

\begin{lemma}\label{cor:degree_2_generators}
Let $G$ be any graph and $\omega \in \tau(G)$ a degree two labeling. If there exists a cycle $G'$ of $G$ such that $\omega_e=1$ for all cycle edges $e\in G'$, $\omega_e=2$ for an odd number cycle legs $e$ of $G'$ and $\omega_e=0$ for the remaining cycle legs $e\in G'$, then the labeling $\omega$ is indecomposable.
\end{lemma}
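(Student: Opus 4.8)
The plan is to reduce the statement to the degree-two indecomposability argument already carried out in Theorem~\ref{thm:buczynska} and Corollary~\ref{cor:first_betti_number_1}, by isolating the single cycle $G'$ and arguing that any decomposition of $\omega$ would force an impossible labeling on the legs of $G'$. The key observation is that the hypotheses only constrain $\omega$ on the cycle $G'$ and its legs; the rest of $G$ is irrelevant to the obstruction, so the proof should be purely local to $G'$.

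First I would argue by contradiction: suppose $\omega = \omega_1 + \omega_2$ with $\omega_1, \omega_2 \in \tau(G)$ both of degree one. By Lemma~\ref{lemma:networks_on_graphs}, each $\omega_i$ is a network, so in particular $(\omega_i)_e \in \{0,1\}$ for every edge $e$. Now restrict attention to the cycle legs of $G'$. For each cycle leg $e$ with $\omega_e = 2$, the bound $(\omega_i)_e \leq 1$ forces $(\omega_1)_e = (\omega_2)_e = 1$. Since there is an odd number of such legs, at least one of the two networks — say $\omega_1$ — carries the value $1$ on an odd number of the legs of $G'$.

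The main step, and the one I expect to be the crux, is turning this parity statement about the legs into a contradiction with the network structure of $\omega_1$. Here I would use the network-restriction behaviour around the cycle exactly as in the proof of Theorem~\ref{thm:buczynska}: a network is a disjoint union of paths and cycles, and each path has its two endpoints at leaves. I would track how the value-$1$ legs of $G'$ must be paired up by the paths of $\omega_1$ that traverse the cycle region. Because $\omega_e = 1$ on every cycle edge of $G'$ and $(\omega_i)_e \le 1$, the two summands split the cycle-edge labels so that along $G'$ each network uses a consistent alternating pattern; the value-$1$ legs attached to $G'$ are precisely the points where a path of $\omega_1$ enters or exits the cycle, and these come in pairs. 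This yields that the number of value-$1$ legs of $G'$ carried by $\omega_1$ must be even, contradicting the odd count derived above.

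To make the pairing argument clean and avoid re-deriving it from scratch, I would prefer to invoke Theorem~\ref{thm:buczynska} (or Corollary~\ref{cor:first_betti_number_1}) directly: the obstruction there is exactly the parity obstruction for a single cycle, and nothing in that argument used that $G$ had first Betti number $1$ globally — it only used the local cycle-plus-legs configuration. Concretely, I would note that the legs of $G'$ behave as leaves of the relevant cycle subconfiguration, and the same counting (\emph{an odd number of legs forces an odd number of leaf-values equal to $1$ in each network, contradicting that each network contributes them in pairs along the cycle}) applies verbatim. The only care needed is to confirm that $G'$ being an embedded cycle in a larger graph does not change the local leg-pairing combinatorics, which it does not, since the argument only ever reads $\omega$ on $G'$ and its incident legs.
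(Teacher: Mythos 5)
Your proposal is correct and takes essentially the same route as the paper: the paper's proof simply observes that any decomposition of $\omega$ restricts to a decomposition of $\omega|_{G'}\in\tau(G')$, where $G'$ is the cycle together with its cycle legs, and then invokes the parity obstruction of Corollary~\ref{cor:first_betti_number_1}. The explicit leg-pairing argument you sketch is just an unpacking of the converse direction of that corollary, so nothing further is needed.
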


\begin{proof}
If $\omega$ decomposes, then a decomposition of $\omega$ restricts to a decomposition of  $\omega |_{G'}\in \tau(G')$, where $G'$ is a cycle together with its cycle legs. Thus the statement follows from Corollary~\ref{cor:first_betti_number_1}.
\end{proof}

\begin{proposition}\label{prop:degree_2_generators}
Let $G$ be a trivalent graph and $\omega \in \tau(G)$ a degree two labeling. The labeling $\omega$ is indecomposable if and only if there exists a cycle $G'$ of $G$ together with its cycle legs such that $\omega |_{G'}\in \tau(G')$ is indecomposable.
\end{proposition}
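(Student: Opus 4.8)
```latex
\textbf{Proof proposal.}
The plan is to prove the two directions separately. One direction is immediate from the lemma already proved. For a degree two labeling $\omega$, if there exists a cycle $G'$ of $G$ together with its cycle legs such that $\omega|_{G'} \in \tau(G')$ is indecomposable, then by Corollary~\ref{cor:first_betti_number_1} the restriction $\omega|_{G'}$ must satisfy conditions $(i),(ii),(iii)$ (since it is an indecomposable degree two element on a graph with first Betti number~$1$). But then Lemma~\ref{cor:degree_2_generators} applies verbatim and shows $\omega$ itself is indecomposable. So this direction costs nothing beyond invoking the two earlier results.

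The substantive direction is the converse: if $\omega$ is indecomposable, I must produce a cycle $G'$ whose associated restriction $\omega|_{G'}$ is indecomposable. The natural strategy is the contrapositive. Suppose that for \emph{every} cycle $G'$ of $G$ (taken together with its cycle legs) the restriction $\omega|_{G'} \in \tau(G')$ is decomposable; I then want to build a global decomposition of $\omega$. The first step is to reduce to a multiple polygon graph: by Corollary~\ref{lemma:polygon_graph}, any decomposition of $\omega|_{G_0}$ on the multiple polygon core $G_0$ lifts to a decomposition of $\omega$, and cutting off the tree parts does not destroy any cycle, so I may assume $G$ is a trivalent multiple polygon graph and work there.

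The key structural step is the cutting argument, mirroring the proof of Theorem~\ref{thm:buczynska}. I would pick a cycle edge $e$ lying on some cycle and cut it to form $G^e$, which has smaller first Betti number. Since $\omega$ has degree two, the induced $\overline{\omega} \in \tau(G^e)$ decomposes as a sum of at most two degree one networks, and the gluing condition $\overline{\omega}_{e_1} = \overline{\omega}_{e_2}$ controls how these pieces reassemble: either each network closes up on its own (giving a genuine decomposition of $\omega$ into two networks, hence $\omega$ decomposable) or the pieces must be paired across the cut. The hard part will be organizing this pairing analysis across \emph{all} the cycles simultaneously — showing that if no single cycle restriction forces indecomposability (i.e.\ every cycle carries an even number of value-$2$ legs, or some cycle edge takes value $0$ or $2$ so that Lemma~\ref{lemma:decomposition_0_or_d}-type splitting applies), then the networks on the pieces can be consistently recombined into two global networks summing to $\omega$. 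Because the triangle inequalities and parity force $\omega_e \in \{0,1,2\}$ on every edge and constrain the cycle-edge values, the case analysis is finite; the genuine obstacle is bookkeeping the parity of value-$2$ legs around each cycle and verifying that decomposability on every individual cycle-with-legs propagates to a coherent decomposition of the whole labeling rather than leaving an obstruction localized at a vertex shared by two cycles. Once that combinatorial recombination is carried out, the contrapositive is complete and the proposition follows.
```
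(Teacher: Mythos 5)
Your easy direction is fine and matches the paper. The converse, however, has a genuine gap, and it sits exactly where you flag ``the hard part.'' First, a concrete error: after cutting a cycle edge $e$ you assert that ``since $\omega$ has degree two, the induced $\overline{\omega}\in\tau(G^e)$ decomposes as a sum of at most two degree one networks.'' For $g>1$ the graph $G^e$ still has first Betti number $g-1\geq 1$, so $\overline{\omega}$ can itself be a degree two indecomposable; whether it decomposes is precisely the question at issue, not a consequence of $\deg(\omega)=2$. Second, the ``combinatorial recombination across all cycles simultaneously'' is not carried out --- you state what would need to be verified and then declare the contrapositive complete, so the core of the argument is missing rather than sketched.

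The paper closes this gap by inducting on the first Betti number and, crucially, by choosing \emph{where} to cut according to the values of $\omega$ on cycle edges. If some cycle edge $e$ has $\omega_e\in\{0,2\}$, one cuts there: every summand of a putative decomposition of $\overline{\omega}$ then automatically takes equal values on the two new leaves (they are $0$ or equal to the summand's degree), so decomposability of $\overline{\omega}$ lifts to decomposability of $\omega$, and indecomposability descends to $G^e$, which has smaller Betti number; moreover a cycle of $G^e$ witnessing indecomposability is still a cycle of $G$. If instead $\omega_e=1$ on all cycle edges, parity forces every cycle leg to carry $0$ or $2$, so no cycle leg is a cycle edge and there is a non-cycle inner edge separating two cycles; cutting there splits $G$ into $G_1\sqcup G_2$, at least one restriction must be indecomposable (else the two decompositions glue), and induction applies to that piece. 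Without this case split and the observation that the cut value $0$ or $2$ makes lifting automatic, your appeal to a ``consistent recombination'' of networks across shared vertices is not justified. I would also note that your reduction to a multiple polygon graph via Corollary~\ref{lemma:polygon_graph} is harmless but not what the paper uses here; the induction on Betti number is the load-bearing structure.
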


\begin{proof}
One direction follows from Lemma~\ref{cor:degree_2_generators}. We show by induction on first Betti number of $G$ that if $\omega \in \tau(G)$ is a degree two indecomposable labeling then there exists a cycle  $G'$ together with its cycle legs such that $\omega|_{G'}$ is a degree two indecomposable labeling.

Base case: If first Betti number of $G$ is 1, then the statement follows from Theorem~\ref{thm:buczynska}.

Induction step: Assume that first Betti number of $G$ is $g>1$. If more than one connected component of $G$ contains a cycle, then there exists a connected component $C$ of $G$ containing a cycle such that $\omega |_{C}\in \tau(C)$ is an indecomposable element of degree two. Since first Betti number of $C$ is less than $g$, we know by induction that there exists a cycle $G'$ of $C$ together with its cycle legs such that $\omega |_{G'}\in \tau(G')$ is an indecomposable element of degree two.

Otherwise all cycles of $G$ live in the same connected component of $G$. If $\omega_e=1$ for all cycle edges $e$, then by the parity condition  $\omega_e\in\{0,2\}$ for all cycle legs $e$. In particular, none of the cycle legs is simultaneously a cycle edge and there exists a cycle leg $e$ that separates some cycles of $G$. Let $e_1,e_2$ be the new leaf edges obtained by cutting $G$ at $e$ and write $G^e=G_1\sqcup G_2$. Then $\omega$ gives $\omega_1 \in \tau(G_1)$ and $\omega_2 \in \tau(G_2)$ with at least one of them indecomposable, otherwise one could lift these decompositions to a decomposition of $\omega$. By induction, for $i$ with $\omega_i$ indecomposable there exists a cycle $G'$ of $G_i$ together with its cycle legs such that $\omega_i |_{G'}\in \tau(G')$ is indecomposable. Thus $\omega|_{G'} \in \tau(G')$ is indecomposable. 

If there exists a cycle edge $e$ with $\omega_e\in \{0,2\}$, then let $e_1$ and $e_2$ be new leaf edges obtained by cutting  $G$ at $e$. The labeling $\omega\in \tau(G)$ gives a labeling $\overline{\omega}\in \tau(G^e)$ that is indecomposable. Otherwise one could lift a decomposition $\overline{\omega}=\overline{\omega}_1+\overline{\omega}_2$ to a decomposition $\omega=\omega_1+\omega_2$, because $(\overline{\omega_i})_{e_1}=(\overline{\omega_i})_{e_2}$. The graph $G^e$ has first Betti number less than $g$. By induction, there exists a cycle $G'$ of $G^e$ together with cycle legs  such that $\overline{\omega}|_{G'}\in \tau(G')$ is indecomposable. Thus $\omega|_{G'} \in \tau(G')$ is indecomposable.
\end{proof}

\section{Graphs with First Betti Number 2}\label{section:first_betti_number_two}

We know from~\cite{BBKM11} that any minimal generator of the phylogenetic semigroup on a graph with first Betti number 2 has degree at most three. In this section, we will explicitly describe which phylogenetic semigroups have which maximal degrees of minimal generating sets for graphs with first Betti number 2. We will see that there are graphs with maximal degrees of minimal generators equal to one, two and three.  Our analysis is based on five different cases depending on the structure of the graph - whether the cycles live in different components of the graph, share at least one edge, share exactly a single vertex, there is a single edge connecting the cycles, or the cycles are more than one edge apart from each other, see Figure~\ref{figure:graphs_with_first_betti_number_2_cases} for latter four cases. 

\begin{figure}[ht]
\centering
\epsfxsize=360pt\epsfbox{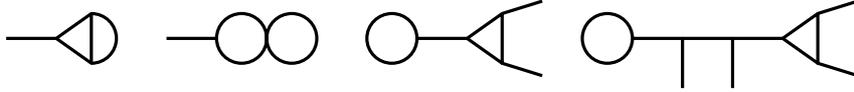}  
\caption{A graph with   (a) cycles sharing at least one edge, (b) cycles sharing exactly a single vertex, (c) a single edge connecting cycles, (d) cycles more than one edge apart from each other}
\label{figure:graphs_with_first_betti_number_2_cases}
\end{figure}

\begin{remark}
Assume a graph has a degree two vertex $v$. Denote the edges incident to $v$ by $e_1$ and $e_2$. By the definition of the phylogenetic semigroup on a graph,  we have $\omega _{e_1}=\omega _{e_2}$ for $\omega \in \tau(G)$. Hence elements of $\tau(G)$ are in one-to-one correspondence with elements of $\tau(G')$, where $G'$ is obtained from $G$ by replacing $e_1$ and $e_2$ by a single edge. To simplify future analysis, from now on  we will assume that graphs posses no degree two vertices.
\end{remark}

\begin{theorem}\label{thm:graphs_with_first_betti_number_2}
Let $G$ be a graph with first Betti number 2. The maximal degree of a minimal generator of $\tau(G)$ is 
\begin{itemize}
\item one if and only if $G$ does not contain any cycle legs that are not cycle edges;

\item two if and only if $G$  the cycles of $G$ live in different connected components, or $G$ contains at least one cycle leg that is not a cycle edge, all cycles of $G$ live in the same connected component and are not separated by an inner vertex;

\item three if and only if the minimal cycles of $G$ live in the same connected component and are separated by at least one inner vertex;
\end{itemize}
\end{theorem}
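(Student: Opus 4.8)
The plan is to prove the three biconditionals at once by showing that the stated conditions form a genuine partition of the first-Betti-number-$2$ graphs and that each class realizes exactly the claimed maximal degree. Throughout I would first invoke the Remark to assume $G$ has no degree-two vertices, and Corollary~\ref{lemma:polygon_graph} to reduce to the multiple polygon core, so that every edge is either a cycle edge or a cycle leg arranged around the two minimal cycles $C_1,C_2$. I would then dispatch the no-cycle-leg case first (this gives degree one), and among graphs that do have a cycle leg, treat the honest trichotomy \emph{different components} / \emph{same component, not separated by an inner vertex} / \emph{same component, separated by an inner vertex}; proving these give degrees $2,2,3$ respectively then yields all six implications.

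\textbf{Degree one and the upper bound two.} If no edge is a cycle leg, then condition (ii) of Corollary~\ref{cor:first_betti_number_1} (an odd number of legs labelled $2$) can never be met on either cycle, so there are no degree-two indecomposables; combined with the degree bound of~\cite{BBKM11} and Lemma~\ref{lemma:networks_on_graphs}, every element is a sum of networks. Conversely a single cycle leg $e$ supports the degree-two element with all cycle edges of its cycle equal to $1$, $\omega_e=2$, and all else $0$, which is indecomposable by Lemma~\ref{cor:degree_2_generators}. For cycles in different connected components, $\tau(G)$ is the product of the phylogenetic semigroups of the two first-Betti-$1$ components, so Corollary~\ref{cor:first_betti_number_1} bounds the generator degrees by two. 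For the connected, non-separated cases (cycles sharing an edge, sharing a vertex, or joined by a single edge), I would take a degree-three $\omega$, cut one carefully chosen edge so that $G$ falls into pieces of first Betti number at most one, decompose each piece via Corollary~\ref{cor:first_betti_number_1}, and then recombine the pieces across the cut into elements of degree at most two.

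The main obstacle in these upper-bound cases is exactly this recombination: I must show that the two sides of the cut admit decompositions whose multisets of labels on the cut edge coincide, while every glued summand stays of degree $\le 2$. The point that makes the matching possible is structural: in cases (a)--(c) no vertex lies strictly between $C_1$ and $C_2$, so no single vertex is forced to carry three large labels simultaneously, and the flexibility in choosing the decompositions on each first-Betti-$1$ side suffices to align the cut labels.

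\textbf{Degree three in the separated case.} The crux is the explicit construction. After the standard replacement turning high-valency vertices into trivalent ones (as in the proof of Corollary~\ref{cor:first_betti_number_1}), let $w$ be an inner vertex lying strictly between $C_1$ and $C_2$, with connecting edges $uw,wv$ ($u\in C_1$, $v\in C_2$) and a leg $\ell$ at $w$. Define $\omega$ by labelling every cycle edge of $C_1$ and $C_2$ by $1$, setting $\omega_{uw}=\omega_{wv}=\omega_\ell=2$, and all other legs $0$; one checks the parity, non-negativity, triangle, and degree conditions directly and that $\deg_w(\omega)=3$. To see $\omega$ is indecomposable, restrict any decomposition to $C_1$ and to $C_2$: by Corollary~\ref{cor:first_betti_number_1} each restriction is the indecomposable degree-two generator (all cycle edges $1$, the single leg $uw$ resp.\ $wv$ equal to $2$), so one summand must carry the full $C_1$-generator and one the full $C_2$-generator. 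If these coincide, that summand already has $uw=wv=2$ and both cycles at label $1$, forcing the complement to be supported on $\ell$ alone, which violates the triangle inequality at $w$; if they differ, then one summand has $(uw,wv)=(2,0)$ and the other $(0,2)$, and the triangle inequality at $w$ forces each to carry $\ell\ge 2$, exceeding $\omega_\ell=2$. Both cases are contradictory, so $\omega$ is a genuine degree-three generator. The heart of the matter, and the precise reason degree three appears exactly when the cycles are separated, is this triangle inequality at the separating vertex $w$: it cannot survive any splitting of the two cycle generators, whereas in the non-separated cases no such vertex exists to obstruct the recombination.
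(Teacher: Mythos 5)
Your overall architecture (partition into cases, explicit indecomposables for the lower bounds, decomposition arguments for the upper bounds) matches the paper's, and your degree-three witness in the separated case is a correct variant of the paper's: the triangle/degree obstruction at the separating vertex $w$ is exactly the right mechanism, and your indecomposability argument goes through (the ``if they differ'' branch is in fact vacuous, since a summand with label $2$ on $uw$ or $wv$ must have degree at least two and only one such summand is available). However, there are two genuine gaps. The serious one is the upper bound of two in the connected, non-separated cases. When the two cycles share an edge (or a vertex), cutting a single edge does \emph{not} break $G$ into pieces of first Betti number at most one with two independent sides to recombine: cutting a shared cycle edge leaves a connected graph of first Betti number one, and the summands produced by Corollary~\ref{cor:first_betti_number_1} need not take equal values on the two new leaf edges, so they do not lift back to $\tau(G)$. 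Your remark that ``the flexibility in choosing the decompositions on each first-Betti-$1$ side suffices to align the cut labels'' is precisely the assertion that has to be proved, and it is where the bulk of the paper's work lies: Lemma~\ref{lemma:graph3} first disposes of the case $\omega_e\in\{0,\deg\omega\}$ on a cycle edge, and otherwise splits a trivalent vertex incident to three cycle edges into \emph{three} leaves to obtain a tree, decomposes there into networks, and then repairs parity and degree at the split vertex by a case analysis on the local path coordinates $x_u,y_u,z_u$ together with branch swapping at the other triple point. Without some such mechanism your recombination step is unsupported.

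The second, smaller gap is in the degree-one case. Showing that condition (ii) of Corollary~\ref{cor:first_betti_number_1} cannot be met only rules out degree-two minimal generators; since the a priori bound from~\cite{BBKM11} is three, you must still exclude degree-three minimal generators, and ``no degree-two generators'' does not imply ``no degree-three generators'' for a graded semigroup. The paper closes this by observing that a first-Betti-number-two graph with no cycle legs (and no degree-two vertices) has its cycles confined to one of two explicit small graphs, whose semigroups are computed directly to be generated in degree one; you need either that classification-plus-computation or a direct decomposition argument for degree-three elements of those graphs.
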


We will study these different cases in Lemmas~\ref{lemma:graph1}--\ref{lemma:graph5}.

\begin{figure}[ht]
\centering
\epsfxsize=120pt\epsfbox{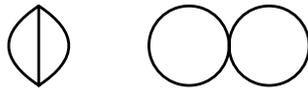} 
\caption{Graphs with first Betti number 2 whose phylogenetic semigroups are normal}
\label{figure:graph_with_three_edges}
\end{figure}

\begin{lemma}\label{lemma:graph1}
Let $G$ be a graph with first Betti number 2 that does not contain any cycle legs that are not cycle edges.
The maximal degree of a minimal generator of $\tau(G)$ is one.
\end{lemma}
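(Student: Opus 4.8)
The goal is to show that $\tau(G)$ is generated in degree one, i.e.\ that every $\omega\in\tau(G)$ with $\deg(\omega)=d\ge 2$ is a sum of exactly $d$ degree-one elements (networks, where the zero labeling with degree one is counted as the \emph{empty network} and is a legitimate degree-one element). The plan is first to pin down the possible shapes of $G$. Since every edge of $G$ is a cycle edge, since $G$ carries no degree-two vertices by the standing convention, and since by Corollary~\ref{lemma:polygon_graph} I may strip away any tree parts and assume $G$ is a multiple polygon graph, an Euler-characteristic count (first Betti number $2$ forces $|E|=|V|+(\text{number of components})$, while minimum degree at least three forces very few vertices) leaves only a short list: up to contracting degree-two vertices, $G$ is either two disjoint cycles, the theta graph (two vertices joined by three parallel edges), or the figure-eight (one vertex with two loops). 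I would treat these three cases separately.

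The single mechanism behind every case is the Betti-one result. By Theorem~\ref{thm:buczynska} / Corollary~\ref{cor:first_betti_number_1}, the only minimal generators of a first-Betti-one graph that are not networks are the degree-two elements carrying an odd --- in particular nonzero --- number of cycle legs labeled $2$. A graph without cycle legs admits no such generator, so the phylogenetic semigroup of any bare cycle is generated in degree one. This immediately disposes of each connected component that is a single cycle.

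For two disjoint cycles $G=G_1\sqcup G_2$ I would restrict $\omega$ to each component, obtaining degree-$d$ elements $\omega|_{G_1}\in\tau(G_1)$ and $\omega|_{G_2}\in\tau(G_2)$ with the same global degree $d$. By the previous paragraph each decomposes into exactly $d$ degree-one elements (padding the shorter list with empty networks so both have precisely $d$ summands), and pairing the summands index-by-index produces $d$ degree-one elements of $\tau(G)$ summing to $\omega$. For the theta graph, writing $(a,b,c)$ for the labels on the three parallel edges, the degree-one generators are the three ``bigons'' $(1,1,0),(1,0,1),(0,1,1)$ together with the empty network $(0,0,0)$; setting $\alpha=\tfrac12(a+b-c)$, $\beta=\tfrac12(a+c-b)$, $\gamma=\tfrac12(b+c-a)$ expresses the edge labels as $\alpha,\beta,\gamma$ copies of these bigons, and the residual degree $d-\tfrac12(a+b+c)$ is filled with empty networks. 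The parity condition makes $\alpha,\beta,\gamma$ integral, the triangle inequalities make them nonnegative, and the degree inequality makes the residual nonnegative, so the decomposition is legitimate. The figure-eight is even simpler: after imposing its two matching conditions the semigroup becomes $\{(p,q,d):p,q\ge 0,\ d\ge p+q\}$, manifestly generated by the empty network and the two single-loop networks $(1,0,1)$ and $(0,1,1)$.

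I expect the genuine subtlety to be twofold. First, Lemma~\ref{lemma:decomposition_0_or_d} does not suffice here: a theta labeling such as $(2,2,2)$ of degree $3$ has no edge equal to $0$ or to $d$, yet must still decompose, so the explicit linear-algebra decomposition above (rather than a single cut) is what carries the connected cases. Second, in the disjoint case the degree is a single global coordinate shared by both components, so the decompositions of $\omega|_{G_1}$ and $\omega|_{G_2}$ must be forced to have the same number of summands; this is exactly what the empty network is for, and checking that it is an admissible degree-one element --- satisfying parity, nonnegativity, the triangle inequalities, and the degree inequality trivially --- is the small but essential point that makes the pairing work.
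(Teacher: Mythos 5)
Your reduction to a short list of graphs is essentially the paper's: with no cycle legs, an edge--vertex count on the component carrying both cycles forces it to be the theta graph or the two-loop graph, i.e.\ the two graphs of Figure~\ref{figure:graph_with_three_edges}, and separate bare-cycle components are handled by the Betti-one result. Where you differ is in the verification. The paper checks the theta graph by a \texttt{Normaliz} computation and the two-loop graph ``by simple observation'', whereas you give an explicit decomposition via $\alpha=\tfrac12(a+b-c)$, $\beta=\tfrac12(a+c-b)$, $\gamma=\tfrac12(b+c-a)$, whose integrality, nonnegativity and degree bound are precisely the parity, triangle and degree conditions of Lemma~\ref{def_cone_of_G}. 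This makes the theta case self-contained and machine-free, and your observation that Lemma~\ref{lemma:decomposition_0_or_d} alone cannot carry it (e.g.\ $(2,2,2)$ in degree $3$) is exactly the right reason an explicit decomposition is needed. Your explicit treatment of two disjoint cycles, with empty networks used to equalize the number of summands, is also more careful than the paper's one-line dismissal of that case.

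One detail in the two-loop (figure-eight) case is wrong, though the conclusion survives. The semigroup is not $\{(p,q,d):p,q\ge 0,\ d\ge p+q\}$: the associated tree is the $4$-leaf claw, and since $(1,1,1,1)$ has even weight it is a vertex of $P_T$, so the constraint cut out by $\operatorname{cone}(P_T\times\{1\})$ is $0\le p\le d$ and $0\le q\le d$, not $p+q\le d$. (The trivalent degree inequality $\deg\ge\tfrac12\sum$ of Lemma~\ref{def_cone_of_G}, which would give $d\ge p+q$ at the $4$-valent vertex, does not apply here.) Concretely, $(1;1,1)$ --- the network consisting of both loops, which are edge-disjoint cycles --- lies in $\tau(G)$ and is a degree-one minimal generator missing from your list; the paper's stated generating set for the right graph of Figure~\ref{figure:graph_with_three_edges} has the same omission. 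The lemma is unaffected: $\{(d;p,q):0\le p,q\le d\}$ is still generated in degree one, e.g.\ by $\min(p,q)$ copies of $(1;1,1)$, $|p-q|$ copies of the appropriate single-loop network, and $d-\max(p,q)$ empty networks. With that case restated, your proof is correct.
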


\begin{proof}
The cycles of $G$ live in the same connected component. Otherwise $G$ would have a degree two vertex. If the connected component of  $G$ containing the cycles has one vertex, then it is isomorphic to the right graph in Figure~\ref{figure:graph_with_three_edges}. If the connected component of  $G$ containing the cycles has two vertices, then it is isomorphic to the left graph in Figure~\ref{figure:graph_with_three_edges}. The connected component of $G$ containing the cycles cannot have three or more vertices, because every vertex must belong to at least two cycles.

By computations with \texttt{Normaliz}~\cite{normaliz}, the phylogenetic semigroup of the left graph in Figure~\ref{figure:graph_with_three_edges} is
\begin{displaymath}
\mathbb{N} \{(1,0,0,0),(1,1,1,0),(1,1,0,1),(1,0,1,1)\},
\end{displaymath}
where the first coordinate corresponds to the degree and the other three coordinates correspond to edges of $G$ in any fixed order. 

By simple observation, the phylogenetic semigroup of the right graph in Figure~\ref{figure:graph_with_three_edges} is 
\begin{displaymath}
\mathbb{N} \{(1,0,0),(1,1,0),(1,0,1)\},
\end{displaymath}
where the first coordinate corresponds to the degree and the other two coordinates correspond to edges of $G$ in any fixed order. 
\end{proof}

\begin{lemma}\label{lemma:graph2}
Let $G$ be a graph with first Betti number 2 and cycles living  in different connected components. The maximal degree of a minimal generator of $\tau(G)$ is two. 
\end{lemma}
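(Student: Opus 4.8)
The plan is to exploit the fact that, for a disconnected graph, the phylogenetic semigroup forces all connected components to carry the \emph{same} degree. Writing $G=K_1\sqcup\cdots\sqcup K_m$ for the connected components, one checks directly from $\tau(T)=\mathrm{cone}(P_T\times\{1\})\cap L_T^{gr}$ that a disjoint tree satisfies $P_T=P_{T_1}\times\cdots\times P_{T_m}$ with a single degree coordinate, and intersecting with the distinguished-pair conditions (which are internal to components) preserves this. Hence a degree $d$ element $\omega\in\tau(G)$ restricts on each component $K_j$ to a degree $d$ element $\omega|_{K_j}\in\tau(K_j)$, and conversely an $m$-tuple of degree $d$ elements, one per component, assembles to a degree $d$ element of $\tau(G)$. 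Since the two independent cycles lie in different components, exactly two components, say $K_1,K_2$, have first Betti number $1$ and the remaining components are trees.

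For the upper bound I would show that every $\omega$ of degree $d\geq 3$ decomposes, by peeling a degree two piece off each component simultaneously. On each component the restriction $\omega|_{K_j}$ again has degree $d\geq 2$; on a tree it is a sum of degree one networks (by normality of $\tau(T)$, as used in the proof of Lemma~\ref{lemma:graph_union_tree}, together with Lemma~\ref{lemma:networks_on_trees}), and on $K_1,K_2$ it is a sum of minimal generators of degree one or two by Corollary~\ref{cor:first_betti_number_1}. From any multiset of $1$'s and $2$'s summing to $d\geq 2$ one can always extract a sub-multiset summing to $2$ (either a single $2$, or two $1$'s). Choosing such a sub-sum in each component yields a degree two element $\beta\in\tau(G)$, and the complementary pieces assemble into a degree $d-2$ element $\gamma\in\tau(G)$; since $d-2\geq 1$, both $\beta$ and $\gamma$ are nonzero, so $\omega=\beta+\gamma$ is a nontrivial decomposition. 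Thus no minimal generator has degree exceeding two.

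For the lower bound I would exhibit one indecomposable element of degree two. Under the standing assumption that $G$ has no degree two vertices, every vertex on the cycle of $K_1$ has valency at least three and therefore carries an edge off the cycle, so $K_1$ has at least one cycle leg; by Corollary~\ref{cor:first_betti_number_1} there exists a degree two minimal generator $\alpha\in\tau(K_1)$. Assembling $\alpha$ on $K_1$ with an arbitrary degree two element on each remaining component produces $\omega^{\ast}\in\tau(G)$ of degree two. If $\omega^{\ast}$ decomposed, it would split as a sum of two degree one networks of $G$, and restricting to $K_1$ would express $\alpha$ as a sum of two degree one elements of $\tau(K_1)$, contradicting the indecomposability of $\alpha$. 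Hence $\omega^{\ast}$ is a degree two minimal generator, and the maximal degree is exactly two.

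The step I expect to demand the most care is the opening observation: justifying precisely that the degree is shared across components rather than assigned independently to each, since the entire argument hinges on every component's restriction having the identical degree $d$. Once that is secured, both the ``peel a $2$'' decomposition and the construction of the degree two generator are routine consequences of Corollary~\ref{cor:first_betti_number_1} and the normality of $\tau(T)$ for trees.
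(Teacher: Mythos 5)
Your proposal is correct and follows essentially the same route as the paper: the lower bound comes from a degree two labeling supported on one cycle, indecomposable by Corollary~\ref{cor:first_betti_number_1} (equivalently Lemma~\ref{cor:degree_2_generators}), and the upper bound comes from decomposing the restriction to each connected component into degree one and two pieces via Corollary~\ref{cor:first_betti_number_1} and recombining. You merely spell out two details the paper leaves implicit --- that the degree coordinate is shared across components, and how to merge the componentwise decompositions by repeatedly peeling off degree two chunks --- both of which are handled correctly.
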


\begin{proof}
Define $\omega \in \tau(G)$ of degree two as follows: $\omega _e=1$ for all cycle edges $e$ of a cycle $G'$ of $G$, $\omega _e=2$ for one cycle leg of $G'$, and $\omega _e=0$ for all other cycle legs of $G'$. Extend this partial labeling of $G$ in any feasible way to a degree two labeling of $G$. By Lemma~\ref{cor:degree_2_generators}, $\omega$ is indecomposable. Hence the maximal degree of a minimal generator of $\tau(G)$ is at least two.

On the other hand, we show that every element $\omega \in \tau(G)$ can be decomposed as a sum of degree one and degree two elements. By Corollary~\ref{cor:first_betti_number_1}, $\omega$ restricted to each connected component decomposes as a sum of degree one and degree two elements. These decompositions can be combined to a decomposition of $\omega\in \tau(G)$ as a sum of degree one and degree two elements. Hence the maximal degree of a minimal generator of $\tau(G)$ is exactly 2.
\end{proof}

Let $G$ be a trivalent graph and $v$ be an inner vertex of $G$. 
Every element $\omega\in\tau(G)$ decomposes locally in a unique way into paths around an inner vertex $v$. This means that there exist non-negative integers $x_v(\omega)$, $y_v(\omega)$, $z_v(\omega)$  such that
\begin{eqnarray*}
a_v(\omega)=y_v(\omega)+z_v(\omega),\\
b_v(\omega)=x_v(\omega)+z_v(\omega),\\
c_v(\omega)=x_v(\omega)+y_v(\omega), 
\end{eqnarray*}
and $x_v(\omega)+y_v(\omega)+z_v(\omega)\leq \textrm{deg}(\omega)$, see also Figure~\ref{fig_building_graph_2}. 

\begin{figure}[htbp]
\centering
\epsfxsize=120pt\epsfbox{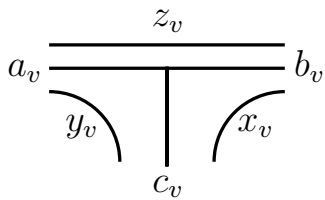}
\caption{Notation for local paths at a vertex}
\label{fig_building_graph_2}
\end{figure}

Let $T$ be a trivalent tree and $\omega_1,\omega _2\in \tau(T)$ networks. Let $v$ be an inner vertex of $T$. Then either $a_v(\omega_1)=a_v(\omega_2)$, $b_v(\omega_1)=b_v(\omega_2)$ or $c_v(\omega_1)=c_v(\omega_2)$, since $a_v(\omega_i)+b_v(\omega_i)+c_v(\omega_i)\in\{0,2\}$ for $i=1,2$. We denote this edge by $e$. By exchanging values of $\omega_1$ and $\omega_2$ on all edges of $T$ that are on the same side with $e$ from $v$, we get $\omega_1',\omega_2'\in \tau(T)$ such that $\omega_1+\omega_2=\omega_1'+\omega_2'$. We call this operation \textit{branch swapping}.

\begin{lemma}\label{lemma:graph3}
Let $G$ be a graph with first Betti number 2 containing at least one cycle leg that is not a cycle edge and two cycles sharing at least one edge. The maximal degree of a minimal generator in $\tau(G)$ is two. 
\end{lemma}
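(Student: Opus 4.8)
The plan is to prove both bounds after reducing to a convenient normal form. First I would pass to a trivalent multiple polygon graph: by Corollary~\ref{lemma:polygon_graph} it suffices to decompose the restriction of $\omega$ to the multiple polygon subgraph $G_0$ carrying both cycles (which, since only trees are removed, still has first Betti number two, shares an edge between its cycles, and retains at least one single-edge cycle leg). Then, using the trivalent replacement of Corollary~\ref{cor:first_betti_number_1}, under which $\tau(G_0)$ is a coordinate projection of $\tau(G')$ for a trivalent $G'$ preserving first Betti number two, the sharing of an edge, and the existence of a cycle leg, and under which any degree $\le 2$ decomposition in $\tau(G')$ projects to one in $\tau(G_0)$, I may assume $G$ is trivalent. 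Discarding degree two vertices, $G$ is then a theta graph: two branch vertices $u,v$ joined by three internally disjoint paths $P_1,P_2,P_3$, each internal vertex of a $P_i$ carrying one cycle leg, with minimal cycles $P_1\cup P_2$ and $P_2\cup P_3$ sharing the middle path $P_2$. The hypothesis that a cycle leg exists means some $P_i$ has an internal vertex.

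For the lower bound, pick a cycle leg $\ell$, let $G'$ be a minimal cycle through the edge to which $\ell$ is attached together with its cycle legs, and set $\omega$ to be $1$ on the cycle edges of $G'$, $2$ on $\ell$, and $0$ on the remaining legs of $G'$, extended to a degree two element of $\tau(G)$ in any feasible way. Then $\omega$ is a degree two minimal generator by Lemma~\ref{cor:degree_2_generators}, so the maximal degree is at least two.

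For the upper bound it suffices to show every $\omega\in\tau(G)$ of degree $d$ is a sum of degree one and degree two elements. Cut the shared edge $\overline e\in P_2$, producing $G^{\overline e}$ of first Betti number $1$ with two new leaves $e_1,e_2$ (near $u$ and $v$) that are cycle legs of the single remaining cycle $C=P_1\cup P_3$; here $\overline\omega_{e_1}=\overline\omega_{e_2}=\omega_{\overline e}$. By Corollary~\ref{cor:first_betti_number_1}, $\overline\omega$ decomposes in $\tau(G^{\overline e})$ into networks and degree two generators of the special form ($1$ on the edges of $C$, $0$ or $2$ on legs). Pieces that already agree on $e_1$ and $e_2$ lift directly to $\tau(G)$ with the same degree. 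The remaining pieces carry a nonzero defect across the cut: a network valued $(1,0)$ or $(0,1)$ on $(e_1,e_2)$, or a degree two generator valued $(2,0)$ or $(0,2)$.

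The main obstacle, and the place where the shared edge is essential, is re-pairing these defect-carrying pieces into balanced summands of degree at most two. A $(1,0)$ network pairs with a $(0,1)$ network into a degree two element crossing $\overline e$ with value $1$, which lifts. The dangerous configuration is a degree two generator $D$ valued $(2,0)$ together with one $D'$ valued $(0,2)$: their naive sum has degree four. Here I would exploit that $C=P_1\cup P_3$ is a single cycle through \emph{both} $u$ and $v$ to reroute. As the two-triangles-sharing-an-edge computation shows, $D+D'$, which equals $2$ on every edge of $C$ and on $e_1,e_2$, can be rewritten by branch swapping along $C$ as the sum of a network realizing a path from $e_1$ to $e_2$ through $C$ and a balanced degree two element, both agreeing on $e_1,e_2$ and hence both lifting through $\overline e$. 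I expect the bulk of the work to be showing, by induction on the number of defect pieces and iterated branch swapping along $C$, that \emph{any} balanced collection of defect pieces rearranges into balanced summands of degree at most two; this step relies crucially on $u$ and $v$ lying on a common cycle of $G^{\overline e}$, a feature guaranteed precisely because the two cycles of $G$ share the edge $\overline e$. Lifting all resulting degree $\le 2$ summands back through $\overline e$ yields the desired decomposition, and together with the lower bound this shows the maximal degree equals two, consistent with Theorem~\ref{thm:graphs_with_first_betti_number_2}.
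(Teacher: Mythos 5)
Your lower bound is exactly the paper's argument and is fine. For the upper bound, however, you have correctly identified where the difficulty lies and then not done it. After cutting the shared edge $\overline e$ and decomposing $\overline\omega$ in $\tau(G^{\overline e})$ via Corollary~\ref{cor:first_betti_number_1}, the pieces that disagree on $e_1,e_2$ are the whole problem, and your treatment of them is both incomplete and, in one place, arithmetically wrong: a degree two generator valued $(2,0)$ plus one valued $(0,2)$ is a degree \emph{four} element, so it cannot equal ``a network plus a balanced degree two element'' (degree three); you would need to exhibit a splitting into, say, four networks or two balanced degree two elements, and this must work for arbitrary placements of the value-$2$ legs of the two generators, not just in the two-triangles example. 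Worse, the defect pieces need not come in such tidy pairs --- e.g.\ two networks each valued $(1,0)$ against one degree two generator valued $(0,2)$ --- so you need a general re-pairing argument over an arbitrary balanced multiset of defects, which is precisely the step you defer with ``I expect the bulk of the work to be showing\dots''. As written, the proof of the upper bound has a genuine gap at its central step.

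For comparison, the paper avoids this re-pairing problem by organizing the reduction differently. It first disposes of the case where some cycle edge $e$ has $\omega_e\in\{0,\deg\omega\}$ by cutting there (then every piece of the Betti-number-one decomposition automatically agrees on the two new leaves). In the remaining case it does not cut an edge at all: it splits one of the two trivalent vertices $u$ incident to three cycle edges into three leaves, obtaining a \emph{tree}, decomposes into degree one pieces there, and then recombines pairs of pieces using the local path data $x_u,y_u,z_u$ and branch swapping at the other vertex $v$, in an explicit case analysis ensuring each recombined piece has degree at most two and satisfies parity at $u$. If you want to salvage your route through the cut edge $\overline e$, you would need to supply the missing rearrangement lemma for balanced defect collections; the paper's vertex-splitting argument is the concrete substitute for it.
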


\begin{proof}
By Corollary~\ref{lemma:polygon_graph}, we can assume that $G$ is a multiple polygon graph. There is at least one cycle leg $e'$ of $G$ that is not a cycle edge for any of the cycles of $G$. Assume that $e'$ is a cycle leg of a cycle $G'$. Define $\omega $ of degree two as follows: $\omega _e=1$ for all cycle edges $e$ of $G'$, $\omega _{e'}=2$, and $\omega _e=0$ for all other edges $e$ of $G$. By Lemma~\ref{cor:degree_2_generators}, the labeling $\omega \in \tau(G)$ is indecomposable. Hence the maximal degree of a minimal generator is at least two.

On the other hand, we show that every element $\omega \in \tau(G)$ can be decomposed as a sum of degree one and degree two elements. If $G$ is not trivalent, then by~\cite[Lemma 4.1]{BBKM11} we can construct a trivalent graph $G'$ with first Betti number 2 such that the maximal degree of the minimal generating set of $\tau(G)$ is less or equal than the one of $\tau(G')$. Moreover, two cycles of $G'$ share an edge. Hence we can assume that $G$ is a trivalent graph. 

If there is a cycle edge $e$ of $G$ with $\omega_e\in\{0,\textrm{deg}(\omega)\}$, we construct the graph $G^e$ with first Betti number 1 by cutting $G$ at $e$. Denote the new leaf edges by $e_1$ and $e_2$. The labeling $\omega$ gives a labeling $\overline{\omega}$ of $G^e$. By Theorem~\ref{thm:buczynska}, the labeling $\overline{\omega}$ can be decomposed as a sum of degree one and two labelings
\begin{displaymath}
 \overline{\omega} =\sum _{i=1}^{\deg(\omega)}\overline{\omega _i},
\end{displaymath}
where
\begin{displaymath}
(\overline{\omega_i})_{e_1}=(\overline{\omega_i})_{e_2}=\left\{ 
\begin{array}{ll}
0 & \textrm{if } \omega_e=0\\
\textrm{deg}(\overline{\omega_i}) & \textrm{if } \omega_e=\textrm{deg}(\omega)
\end{array}
\right.
\end{displaymath} 
Hence the decomposition of $\overline{\omega}$ gives a decomposition of $\omega$ with all labelings having degree one or two. From now on we assume that there is no cycle edge $e$ of $G$ with $\omega_e\in\{0,\textrm{deg}(\omega)\}$.

There are exactly two vertices of $G$ incident to three cycle edges. We denote them by $u$ and $v$. We construct a tree $G'$ from $G$  by replacing the vertex $u$ with three new vertices $u_1,u_2$ and $u_3$ as in Figure~\ref{figure:replace_vertex_with_three_vertices}. The labeling $\omega$ gives a labeling $\omega'$ of $G'$. Abusing the notation slightly, we denote by $a_u(\omega'),b_u(\omega'),c_u(\omega')$ the coordinates of $\omega'$ corresponding to leaf edges with endpoints $u_1,u_2,u_3$, respectively.

\begin{figure}[ht]
\centering
\epsfxsize=200pt\epsfbox{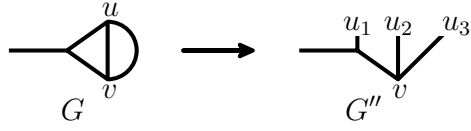} 
\caption{Construction of $G''$ from $G$ by replacing $u$ with $u_1,u_2,u_3$}
\label{figure:replace_vertex_with_three_vertices}
\end{figure}

The labeling $\omega'$ can be decomposed as a sum of degree one labelings
\begin{displaymath}
 \omega' =\sum _{i=1}^{\deg(\omega)}\omega' _i.
\end{displaymath}
From this we want to construct a decomposition of $\omega\in\tau(G)$. To lift an element of $\tau(G')$ to an element of $\tau(G)$, the parity and the degree condition have to be satisfied at leaf edges with endpoints $u_1,u_2,u_3$. This is not true for all $\omega' _i$. We need to combine and alter these elements. We will use local paths to assure the parity and degree conditions are satisfied. We will construct the decomposition of $\omega \in \tau(G)$ iteratively. In each step we construct a degree one or two element $\omega^*$ and then take $\omega:=\omega-\omega^*$.

Case 1. $\textrm{deg}_u(\omega)=\textrm{deg}(\omega)$. Note that $x_u(\omega),y_u(\omega),z_u(\omega)\geq 1$, otherwise there would be a cycle edge $e$ of $G$ with $\omega_e=\textrm{deg}(\omega)$.
\begin{itemize}
\item If there is $\omega'_i$ with exactly two of $a_u(\omega'_i),b_u(\omega'_i),c_u(\omega'_i)$ equal to 1, then $\omega'_i$ can be lifted to a degree one labeling of $G$.

\item Otherwise if there is $\omega'_i$ with exactly one of $a_u(\omega'_i),b_u(\omega'_i),c_u(\omega'_i)$ equal to 1, then there is $\omega'_j$ with all of $a_u(\omega'_j),b_u(\omega'_j),c_u(\omega'_j)$ equal to 1. Then $\omega'_i+\omega'_j$ can be lifted to a degree two labeling of $G$.

\item Otherwise there has to be $\omega'_i$ with all of $a_u(\omega'_i),b_u(\omega'_i),c_u(\omega'_i)$ equal to 0. Then there is $\omega'_j$ with all of $a_u(\omega'_j),b_u(\omega'_j),c_u(\omega'_j)$ equal to 1. After branch swapping of $\omega'_i$ and $\omega'_j$ at $v$, we get a labeling with exactly two values corresponding to $a_u,b_u,c_u$ equal to 1. It can be lifted to a degree on labeling of $G$.
\end{itemize}

Case 2. $\textrm{deg}_u(\omega)<\textrm{deg}(\omega)$. 
\begin{itemize}
\item If there exists $\omega' _i$ with $a_u(\omega'_i)=b_u(\omega'_i)=c_u(\omega'_i)=0$, then $\omega' _i$ lifts to a labeling of $\tau(G)$. 
\end{itemize}

Otherwise consider two subcases:

Case 2.1. $x_u(\omega),y_u(\omega),z_u(\omega)\geq 1$.
\begin{itemize}
\item If there is $\omega'_i$ with exactly two of $a_u(\omega'_i),b_u(\omega'_i),c_u(\omega'_i)$ equal to 1, then $\omega'_i$ can be lifted to a degree one labeling of $G$.

\item Otherwise if there is $\omega'_i$ with all of $a_u(\omega'_i),b_u(\omega'_i),c_u(\omega'_i)$ equal to 1, then there is $\omega' _j$ with exactly one of $a_u(\omega'_j),b_u(\omega'_j),c_u(\omega'_j)$ equal to 1. Then $\omega'_i+\omega'_j$ can be lifted to a degree two labeling of $G$.

\item Otherwise all $\omega'_i$ have exactly one of $a_u(\omega'_i),b_u(\omega'_i),c_u(\omega'_i)$ equal to 1. Since $x_u(\omega)\geq 1$ there is $\omega' _i$ with $a_u(\omega' _i)=c_u(\omega' _i)=0$ and $b_u(\omega' _i)=1$ and $\omega' _j$ with $a_u(\omega' _j)=b_u(\omega' _j)=0$ and $c_u(\omega' _j)=1$. Then $\omega' _i+\omega' _j$ can be lifted to a degree two labeling for $G$.
\end{itemize}

Case 2.2. Exactly two of $x_u(\omega),y_u(\omega),z_u(\omega)\geq 1$. It is not possible to have only one $x_u(\omega),y_u(\omega),z_u(\omega)\geq 1$, because we assumed $\omega_e> 0$ for every cycle edge $e$. We assume that $x_u(\omega),y_u(\omega)\geq 1$, the other two cases are analogous.
\begin{itemize}
\item If there is $\omega'_i$ with exactly $b_u(\omega'_i),c_u(\omega'_i)$ or $a_u(\omega'_i),c_u(\omega'_i)$ equal to 1, then $\omega'_i$ can be lifted to a degree one labeling of $G$.

\item Otherwise if there is $\omega'_i$ with exactly $a_u(\omega'_i),b_u(\omega'_i)$ equal to 1, there is  $\omega' _j$ with exactly $c_u(\omega'_i)$ equal to 1, since $c_u(\omega)>a_u(\omega)$ and $c_u(\omega)>b_u(\omega)$. After branch swapping $\omega' _i$ and $\omega' _j$ at $v$, we either get a labeling with all values corresponding to $a_u,b_u,c_u$ equal to 0 or a labeling with values corresponding to $b_u,c_u$ equal to 1 or a labeling with values corresponding to $a_u,c_u$ equal to 1. They all can be lifted to a degree one labeling of $G$.

\item Otherwise if there is $\omega'_i$ with all of $a_u(\omega'_i),b_u(\omega'_i),c_u(\omega'_i)$ equal to 1, there is $\omega' _j$ with exactly $c_u(\omega'_j)$ equal to 1, since $c_u(\omega)>a_u(\omega)$ and $c_u(\omega)>b_u(\omega)$. Then $\omega_i+\omega_j$ can be lifted to a degree two labeling of $G$.

\item Otherwise all $\omega'_i$ have exactly one of $a_u(\omega'_i),b_u(\omega'_i),c_u(\omega'_i)$ equal to 1. Since $x_u(\omega)\geq 1$ there is $\omega' _i$ with $a_u(\omega' _i)=c_u(\omega' _i)=0$ and $b_u(\omega' _i)=1$, and $\omega' _j$ with $a_u(\omega' _j)=b_u(\omega' _j)=0$ and $c_u(\omega' _j)=1$. Then $\omega' _i+\omega' _j$ can be lifted to a degree two labeling for $G$.

\end{itemize}

At each step a degree one or two element is constructed. This assures that the iterative process comes to an end, because the degree of $\omega$ decreases.
\end{proof}

\begin{lemma}\label{lemma:graph3b}
Let $G$ be a graph with first Betti number 2 containing at least one cycle leg that is not a cycle edge and two cycles sharing exactly one vertex. The maximal degree of a minimal generator in $\tau(G)$ is two.  
\end{lemma}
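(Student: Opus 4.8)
The plan is to prove the two bounds separately: exhibit a degree-two minimal generator (lower bound), and show that every element of $\tau(G)$ decomposes into generators of degree at most two (upper bound). For the lower bound I would use the hypothesis directly. Let $e'$ be a cycle leg that is not a cycle edge, lying on a cycle $G'$ of $G$, and define $\omega$ by $\omega_e=1$ on every cycle edge of $G'$, $\omega_{e'}=2$, and $\omega_e=0$ on all remaining cycle legs of $G'$ and on every edge outside $G'$. A routine check of the parity, non-negativity, triangle and degree conditions shows this extension-by-zero lies in $\tau(G)$; since exactly one, hence an odd number, of the cycle legs of $G'$ carries the value $2$, Lemma~\ref{cor:degree_2_generators} shows $\omega$ is indecomposable. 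Thus the maximal degree is at least two.

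For the upper bound the key idea is that the degree-four vertex $v$ shared by the two cycles can be split so as to reduce to the already-established Lemma~\ref{lemma:graph3}. I would apply the trivalentization of~\cite[Lemma~4.1]{BBKM11}, splitting $v$ into two trivalent vertices $v_1,v_2$ joined by a new edge $e_0$ in the \emph{crossing} fashion: route one edge of each cycle to $v_1$ and the remaining edge of each cycle to $v_2$ (all other high-valency vertices are split as in Corollary~\ref{cor:first_betti_number_1}). Checking the resulting incidences shows that each cycle now leaves through $v_1$ and returns through $v_2$, so both cycles traverse $e_0$; hence in the resulting trivalent graph $G'$ the two cycles share the edge $e_0$, while the cycle leg guaranteed by hypothesis survives the splitting. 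Therefore $G'$ satisfies the hypotheses of Lemma~\ref{lemma:graph3}, and the maximal degree of a minimal generator of $\tau(G')$ is two.

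It then remains to transfer this bound back to $G$. As in the proof of Corollary~\ref{cor:first_betti_number_1}, $\tau(G)$ is the degree-preserving, surjective coordinate projection of $\tau(G')$ that forgets the coordinate of the new edge $e_0$. Thus any $\omega\in\tau(G)$ lifts to some $\omega'\in\tau(G')$, which by Lemma~\ref{lemma:graph3} decomposes into pieces of degree at most two; projecting this decomposition yields a decomposition of $\omega$ into elements of degree at most two. Combined with the lower bound, the maximal degree of a minimal generator of $\tau(G)$ is exactly two.

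The step I expect to be the main obstacle is verifying that the crossing split genuinely yields two cycles \emph{sharing an edge} $e_0$ — rather than merely a single edge connecting two otherwise disjoint cycles, which would land in the different case of case (c) — together with confirming that this specific trivalentization is an instance of the construction of~\cite[Lemma~4.1]{BBKM11}, so that decomposability descends along the projection. Once the reduction to Lemma~\ref{lemma:graph3} is correctly set up, the remaining points (the feasibility of the lower-bound labeling and the persistence of the cycle leg through the splitting) are routine.
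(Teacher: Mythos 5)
Your proposal is correct and follows essentially the same route as the paper: the lower bound is the same extension-by-zero labeling certified by Lemma~\ref{cor:degree_2_generators}, and the upper bound uses the same ``crossing'' split of the shared vertex into two trivalent vertices joined by a new edge lying on both cycles, reducing to Lemma~\ref{lemma:graph3} and transferring the bound back along the coordinate projection of~\cite[Lemma~4.1]{BBKM11}. The step you flagged as the main obstacle is exactly the point the paper makes explicit: the new edge is chosen so that it belongs to both cycles, which is precisely what the crossing assignment of the four cycle edges guarantees.
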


\begin{proof}
By Corollary~\ref{lemma:polygon_graph}, we can assume that $G$ is a multiple polygon graph. There is at least one cycle leg $e'$ of $G$ that is not a cycle edge for any of the cycles of $G$. Assume that $e'$ is a cycle leg of a cycle $G'$. Define $\omega $ of degree two as follows: $\omega _e=1$ for all cycle edges $e$ of $G'$, $\omega _{e'}=2$ and $\omega _e=0$ for all other edges $e$ of $G$. By Lemma~\ref{cor:degree_2_generators}, the labeling $\omega \in \tau(G)$ is indecomposable. Hence the maximal degree of a minimal generator is at least two.

On the other hand, we show that every element $\omega \in \tau(G)$ can be decomposed as a sum of degree one and degree two elements. We construct a trivalent graph $G'$ from $G$ as in~\cite[Lemma 4.1]{BBKM11} such the that the maximal degree of the minimal generating set of $\tau(G)$ is less or equal than the one of $\tau(G')$.
In particular, first we decrease the valency at the vertex $v$ that is on both cycles. We replace it by vertices $v',v''$ and an edge $e$ between them such that $e$ belongs to both cycles. We repeat replacing vertices until there are only trivalent vertices left. The graph $G'$ has two cycles that share at least one edge, thus we can apply Lemma~\ref{lemma:graph3}.  
\end{proof}

\begin{lemma}\label{lemma:graph4}
Let $G$ be a graph with first Betti number 2 where the two cycles are separated by a single edge $e$. The maximal degree of a minimal generator of $\tau(G)$ is two. 
\end{lemma}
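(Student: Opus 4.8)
The plan is to reduce $G$ to a convenient normal form and then prove two things separately: that $\tau(G)$ has an indecomposable element of degree two, and that every element of degree at least three decomposes. By Corollary~\ref{lemma:polygon_graph} it suffices to treat the multiple polygon part, so I may assume $G$ consists of the two cycles $C_1,C_2$ joined by the single edge $e=(v_1,v_2)$ together with pendant cycle legs, and by the construction of \cite[Lemma~4.1]{BBKM11} used in Lemma~\ref{lemma:graph3} I may further assume $G$ is trivalent without changing the maximal degree nor the property that the cycles are joined by a single edge; then $v_1\in C_1$ and $v_2\in C_2$ each carry $e$ as their unique leg. For the lower bound, set $\omega_e=2$, put $\omega=1$ on every cycle edge of $C_1$ and of $C_2$, and $\omega=0$ on all pendant legs. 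One checks the parity, triangle and degree conditions, so $\omega\in\tau(G)$ has degree two; restricting to $C_1$ together with its legs shows $\omega$ has value $2$ on exactly one leg ($e$) of $C_1$ and $0$ on the others, whence $\omega$ is indecomposable by Lemma~\ref{cor:degree_2_generators}. Thus the maximal degree is at least two.

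For the upper bound I would argue by induction on $d=\deg(\omega)$ that every $\omega\in\tau(G)$ of degree $d\ge 3$ splits off a summand of degree one or two lying in $\tau(G)$. Cutting $G$ at $e$ gives $G^{e}=G_1\sqcup G_2$ with $G_i$ a polygon graph containing $C_i$ and a new leaf edge $e_i$; since the degree coordinate is shared across a disjoint union, $\omega$ restricts to $\omega_i\in\tau(G_i)$, both of degree $d$, with $(\omega_1)_{e_1}=(\omega_2)_{e_2}=\omega_e=:c$. By Theorem~\ref{thm:buczynska} I fix decompositions of $\omega_1$ and $\omega_2$ into networks and degree-two generators. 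On the leg $e_i$ a network contributes $0$ or $1$ and a degree-two generator contributes $0$ or $2$, so writing $P_i$ for the number of value-one networks and $Q_i$ for the number of value-two generators through $e_i$ we have $P_i+2Q_i=c$, and in particular $P_1\equiv P_2\equiv c\pmod 2$.

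The extraction then goes by cases on the flow $c$ through $e$. If $c=0$ the two sides are independent, and since $d\ge 2$ each $\omega_i$ contains a degree-two sub-element vanishing on $e_i$ (either a degree-two piece of its decomposition, which must vanish on $e_i$ because the whole element does, or the sum of two of its networks); combining these across the cut yields a degree-two $\sigma\in\tau(G)$ with $\sigma_e=0$ and $\omega-\sigma\in\tau(G)$. If $P_1\ge 1$ and $P_2\ge 1$, I extract one value-one network from each side and glue them into a single degree-one path $\sigma$ through $e$. In the remaining case, say $P_1=0$ (forcing $c=2Q_1$ even with $Q_1\ge 1$, and $P_2$ even), I extract a value-two generator from $\omega_1$, and from $\omega_2$ I build a degree-two sub-element of value two on $e_2$ — a generator if $Q_2\ge1$, otherwise the sum of two value-one networks, which exist because then $P_2=c\ge 2$ — and glue the two into a degree-two $\sigma\in\tau(G)$ with $\sigma_e=2$. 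In every case $\sigma$ has degree at most two and $\omega-\sigma\in\tau(G)$ has strictly smaller degree, so the induction closes.

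The main obstacle is precisely this gluing across $e$: a single element of $\tau(G)$ restricts to the two sides with \emph{equal} degree and \emph{equal} value on $e$, so the degree-$\le 2$ pieces produced on $\tau(G_1)$ and $\tau(G_2)$ cannot be paired naively — a value-two generator forced on one side need not be mirrored by one on the other (indeed in degree two this already produces genuine indecomposables). The device that resolves this, and that separates the present case from cycles lying more than one edge apart, is that $v_1,v_2$ sit on the cycles: the parity identity $P_i\equiv c\pmod 2$ always leaves enough value-one networks on the opposite side to manufacture a matching value-two piece via $1+1=2$, so one never needs to route more than degree two through $e$, and no degree-three generator arises.
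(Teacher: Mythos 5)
Your proof is correct and follows essentially the same route as the paper: the same explicit degree-two labeling (value $1$ on cycle edges, $2$ on the separating edge) combined with Lemma~\ref{cor:degree_2_generators} for the lower bound, and for the upper bound a reduction to the trivalent case followed by cutting at $e$, decomposing both first-Betti-number-one sides via Theorem~\ref{thm:buczynska}/Corollary~\ref{cor:first_betti_number_1}, and regluing. The only difference is that you spell out the matching across the cut (the parity bookkeeping $P_i+2Q_i=c$ and the $1+1=2$ pairing) that the paper compresses into a single sentence about degree-two pieces taking values $0$ or $2$ on the new leaf edges.
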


\begin{proof}
Define $\omega \in \tau(G)$ of degree two as follows: $\omega _e=1$ for all cycle edges $e$, $\omega _e=2$ for the single edge separating cycles, and $\omega _e=0$ for all other edges. By Lemma~\ref{cor:degree_2_generators}, $\omega$ is indecomposable. Hence the maximal degree of a minimal generator of $\tau(G)$ is at least two.

On the other hand, we show that every element $\omega \in \tau(G)$ can be decomposed as a sum of degree one and degree two elements. If $G$ is not trivalent, then by~\cite[Lemma 4.1]{BBKM11} we can construct a trivalent graph $G'$ with first Betti number 2 such that the maximal degree of the minimal generating set of $\tau(G)$ is less or equal than the maximal degree of the minimal generating set of $\tau(G')$. Moreover, we may assume that every time we replace a vertex $v$ on a cycle by vertices $v',v''$ and an edge between them, then $v',v''$ belong to the same cycle. This assures that the  two cycles of $G'$ are separated by a singe edge. Hence we can assume that $G$ is a trivalent graph.

Let $e_1,e_2$ be new leaf edges obtained by cutting $G$ at $e$ and write $G^e=G_1\sqcup G_2$. The labeling $\omega$ gives labelings $\omega_1$ of $G_1$ and $\omega_2$ of $G_2$. By Corollary~\ref{cor:first_betti_number_1}, we can decompose $\omega_1$ and $\omega_2$ as a sum of degree one and degree two elements. Because all degree two labelings in these decompositions have values 0 or 2 corresponding to the edges $e_1$ and $e_2$, we can combine decompositions of $\omega_1$ and $\omega_2$ to get a decomposition of $\omega$ that consists of degree one and two elements. Hence the maximal degree of a minimal generator of $\tau(G)$ is exactly two.
\end{proof}

\begin{lemma}\label{lemma:graph5}
Let $G$ be a graph with first Betti number 2 where the two cycles are separated by at least one inner vertex. The maximal degree of a minimal generator of $\tau(G)$ is three.
\end{lemma}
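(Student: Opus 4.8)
The upper bound requires no work: by~\cite{BBKM11} every minimal generator of $\tau(G)$ has degree at most three, so the entire task is to produce a single indecomposable element of degree exactly three. By Corollary~\ref{lemma:polygon_graph} I may assume $G$ is a multiple polygon graph, and the construction below will be carried out on this core; indecomposability transfers back to the original graph because restriction to the core is a degree-preserving homomorphism, so any nontrivial splitting of an extension would restrict to a nontrivial splitting of the core element. Under the hypothesis of the lemma, $G$ consists of two vertex- and edge-disjoint minimal cycles $C_1,C_2$ joined by a single bridge path $w_1=u_0,u_1,\dots,u_m=w_2$ with $m\ge 2$, where $w_1\in C_1$, $w_2\in C_2$, and each internal vertex $u_1,\dots,u_{m-1}$ is an inner vertex of degree at least three; since the multiple polygon graph has no hanging trees, the non-bridge edges at these internal vertices are leaf edges (legs).

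The plan is to define $\omega\in\tau(G)$ of degree three by setting $\omega_e=1$ on every cycle edge, $\omega_e=2$ on every bridge edge $g_i=u_{i-1}u_i$ and on one chosen leg $\ell$ at one chosen internal bridge vertex $v=u_k$, and $\omega_e=0$ on all remaining edges. Membership $\omega\in\tau(G)$ I would verify directly from Lemma~\ref{def_cone_of_G} in the trivalent case and from the defining tree description in general: every vertex sum is even (parity), the labels are non-negative, the incident values are $(1,1,2)$ at each junction $w_j$, $(2,2,2)$ at $v$, and $(2,2,0)$ at the other internal bridge vertices (all satisfying the triangle inequalities), and the local degrees are $1$, $2$ and $3$, so that $\deg(\omega)=3\ge\deg_x(\omega)$ at every $x$, with equality precisely at $v$.

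The heart of the proof is indecomposability. As $3=1+2$ is the only splitting of the degree into positive parts (a $1+1+1$ splitting groups to a $1+2$ one), suppose $\omega=\omega_1+\omega_2$ with $\deg\omega_1=1$ and $\deg\omega_2=2$. By Lemma~\ref{lemma:networks_on_graphs}, $\omega_1$ is a network, so $(\omega_1)_e\in\{0,1\}$ for all $e$. Summing the parity condition for $\omega_1$ over the vertices of a cycle $C_j$, each cycle edge is counted twice, so the sum of $(\omega_1)_e$ over the non-cycle edges meeting $C_j$ is even; as all of these except the bridge edge at $w_j$ are zero, this forces $(\omega_1)_{g_1}=(\omega_1)_{g_m}=0$. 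Propagating parity along the bridge now pins $\omega_1$ to zero on the whole bridge: at each internal vertex other than $v$ the legs are zero, so the two incident bridge values of $\omega_1$ must agree, whence they all vanish; finally parity at $v$ forces $(\omega_1)_\ell=0$ as well. Therefore $\omega_2$ equals $2$ on both bridge edges at $v$ and on $\ell$.

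Thus $\omega_2$ has three edges of value $2$ meeting at $v$. When $v$ is trivalent this gives $\deg_v(\omega_2)=\tfrac12(2+2+2)=3>2=\deg(\omega_2)$, contradicting the degree inequality of Lemma~\ref{def_cone_of_G}; for higher-valence $v$ the same conclusion follows by resolving $v$ inside the associated tree $T$, where the zero branches force the value $2$ to propagate unchanged to some internal trivalent vertex that then sees $(2,2,2)$, again forcing degree at least three. Either way no such splitting exists, $\omega$ is indecomposable of degree three, and the maximal degree of a minimal generator of $\tau(G)$ equals three. I expect the global parity propagation — forcing the network $\omega_1$ to vanish along the entire bridge and on the distinguished leg — to be the main obstacle, the reductions and the membership check being routine; the contrast with Lemma~\ref{lemma:graph4} is telling, since a single separating edge provides no internal vertex at which to lodge the obstructing value $2$.
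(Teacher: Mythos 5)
Your proof is correct and follows essentially the same strategy as the paper: reduce to a multiple polygon graph via Corollary~\ref{lemma:polygon_graph}, exhibit an explicit degree three element whose obstruction sits on a leg at an internal bridge vertex $v$, and show that in any splitting $\omega=\omega_1+\omega_2$ with $\deg\omega_1=1$ the network $\omega_1$ is forced to vanish on the bridge and on the leg, so that $\omega_2$ violates the degree (equivalently parity) condition at $v$. The only real difference is the witness: the paper puts the value $2$ on the cycle edges as well as on the bridge and on $e^*$, and derives the forced values of $\omega_2$ rather tersely from the structure of degree two elements on a cycle, whereas you put $1$ on the cycle edges and force $\omega_1$ to vanish on the bridge by summing the parity condition around each cycle and propagating along the bridge; your version of the forcing step is, if anything, spelled out more completely than the paper's. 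One small caveat: your treatment of a non-trivalent $v$ by ``resolving $v$ inside the associated tree $T$'' is not quite right, since the tree $T$ used to define $\tau(G)$ is obtained by cutting cycle edges and is not trivalent; the clean fix is either the parity argument directly (a degree two element of $\tau(G)$ cannot take the value $2$ on exactly three edges at $v$ and $0$ on the rest, since each of the two networks in its decomposition would then have an odd edge sum at $v$), or the trivalent resolution of \cite[Lemma 4.1]{BBKM11} that the paper uses elsewhere. This is a minor, fixable point; the paper's own proof does not address non-trivalent vertices explicitly either.
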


\begin{proof}
By Corollary~\ref{lemma:polygon_graph}, we can assume that $G$ is a multiple polygon graph. We need to specify a degree three indecomposable element $\omega \in \tau(G)$. Fix an inner vertex $v$ on the path between the two cycles of $G$ and an edge $e^*$ incident to $v$ that is not on the path between the two cycles. Define $\omega _e=2$ for all cycle edges $e$ and all edges $e$ on the path between the cycles of $G$, $\omega _{e^*}=2$, and $\omega _e=0$ for all other edges $e$. 

We will show that $\omega $ is indecomposable as a degree three labeling. By contradiction, assume $\omega=\omega_1+\omega _2$, where deg$(\omega_1)=1$ and deg$(\omega_2)=2$. We must have $(\omega_2)_e=1$ for all cycle edges of $G$ and $(\omega_2)_e=2$ for both cycle legs $e$ that lie on the path between the two cycles. Hence also $(\omega _2)_e=2$ for all edges $e$ that lie on the path between the two cycles. Thus  $(\omega _1)_{e^*}=2$ . This leads to a contradiction, because deg$(\omega _1)=1$. Hence $\omega$ is a degree three indecomposable element in $\tau(G)$.
\end{proof}

\begin{proof}[Proof of Theorem~\ref{thm:graphs_with_first_betti_number_2}]
Theorem follows from Lemmas~\ref{lemma:graph1}--\ref{lemma:graph5}.
\end{proof}

\section{Degree Three Minimal Generators}\label{section:degree_three_generators}

In this section, we describe degree three minimal generators of phylogenetic semigroups on trivalent graphs with first Betti number 2 having at least one inner vertex between two cycles. Together with  Lemma~\ref{lemma:networks_on_graphs}, Proposition~\ref{prop:degree_2_generators} and Theorem~\ref{thm:graphs_with_first_betti_number_2} this completely characterizes minimal generating sets of phylogenetic semigroups on trivalent graphs with first Betti number 2.

Let $G$ be a trivalent graph with first Betti number 1 and $\omega$ a degree two indecomposable labeling. Label the cycle legs of $G$ where $\omega$ has value two by $e_0,\ldots ,e_{2k}$ in clockwise order. Slightly abusing notation, we write $e_{i+j}$ for $e_{i+j \mod 2k}$ where $i+j>2k$. Label by $P_{e',e''}$  the path starting at a cycle leg $e'$ and going in the clockwise direction until reaching a cycle leg $e''$ . Write $P_i$ for $P_{e_i,e_{i+1}}$.  We say a cycle leg $e$ is between cycle legs $e_i$ and $e_j$, when $e$ is between cycle legs $e_i$ and $e_j$ in clockwise direction.

\begin{lemma}\label{lemma:decompositions_of_degree_three_labelings}
Let $G$ be a trivalent polygon graph and $\omega \in \tau(G)$ a degree three labeling. Then $\omega $ cannot be decomposed as a sum of degree one labelings if and only if $\omega =\omega _1+\omega_2$ such that
\begin{itemize}
\item $\deg(\omega_1)=1$ and $\deg(\omega_2)=2$,
\item $\omega_2$ is indecomposable with value two on cycle legs $e_0,\ldots ,e_{2k}$ and value zero on all other cycle legs,
\item $\omega_1$ is $P_0\cup P_2 \cup \ldots \cup P_{2k-2}$, $P_0 \cup P_2 \cup \ldots P_{2k-4} \cup P_{e_{2k-2},e_{2k}}$, $P_{0}\cup P_2 \cup \ldots P_{2k-2} \cup P_{e_{2k},e'}$ where $e'$ is a cycle leg between $e_{2k}$ and $e_{0}$ or also the cycle path if $k=0$.
\end{itemize}
\end{lemma}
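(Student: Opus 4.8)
The plan is to prove both implications by cutting the unique cycle at one cycle edge $e^{*}$ (so that $G^{e^{*}}$ is a tree, on which every element decomposes into networks, as used in Lemma~\ref{lemma:decomposition_0_or_d}) and by exploiting one elementary invariant: for any network $\nu$ on a trivalent polygon graph the number of cycle legs $e$ with $\nu_{e}=1$ is \emph{even}. Indeed, every leg is a leaf edge, so a path of $\nu$ can meet a leg only at one of its two endpoints; hence the value-one legs of $\nu$ split into the endpoint-pairs of its non-cyclic paths, while the cycle path uses none.

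For the implication ``$\Leftarrow$'' I would assume $\omega=\omega_{1}+\omega_{2}$ has the stated shape and compute the values of $\omega$ on the cycle legs. In each of the three cases the legs $e_{0},\dots,e_{2k}$ carry value $2$ from $\omega_{2}$, and adding $\omega_{1}$ turns all but (at most) one of them into value $3$; the single exceptional leg has value $2$ (forms one and two) or there is one extra leg of value $1$ (form three). Suppose for contradiction $\omega=\nu_{1}+\nu_{2}+\nu_{3}$ with each $\nu_{i}$ a network. A value-three leg forces $(\nu_{i})_{e}=1$ for all three $i$, so every $\nu_{i}$ contains the same even-sized block of legs; the remaining value-two leg then forces exactly two of the $\nu_{i}$ to acquire one further value-one leg, making their leg counts odd, and the value-one leg of form three does the same for two of the $\nu_{i}$. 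This contradicts the evenness invariant, so $\omega$ is not a sum of networks. The degenerate case $k=0$ (the cycle path) is checked the same way.

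For ``$\Rightarrow$'' I would first use that on a graph with first Betti number $1$ every minimal generator has degree at most two \cite{BBKM11}, together with Corollary~\ref{cor:first_betti_number_1}: since $\deg\omega=3$ and $\omega$ is not a sum of networks, any decomposition into generators must be of type $2+1$, giving $\omega=\omega_{1}+\omega_{2}$ with $\omega_{1}$ a network and $\omega_{2}$ a degree-two indecomposable whose value-two legs I label $e_{0},\dots,e_{2k}$. By Lemma~\ref{lemma:decomposition_0_or_d} no cycle edge of $\omega$ has value $0$ or $3$. It then remains to show that $\omega_{1}=\omega-\omega_{2}$ must be one of the three listed networks; equivalently, that if $\omega_{1}$ does not have this form then $\omega$ \emph{does} split into three networks, contradicting the hypothesis. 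To produce such a splitting I would cut at a cycle edge $e^{*}$ with new leaf edges $f_{1},f_{2}$: on the tree $G^{e^{*}}$ one has $\omega_{2}=\beta_{1}+\beta_{2}$, where $\beta_{1},\beta_{2}$ are the two colour classes of the arcs $P_{i}$ and the odd number $2k+1$ of value-two legs forces them to \emph{cross} at the cut, i.e. $(\beta_{1})_{f_{1}}=(\beta_{2})_{f_{2}}=1$ and $(\beta_{1})_{f_{2}}=(\beta_{2})_{f_{1}}=0$. Thus $\omega=\omega_{1}+\beta_{1}+\beta_{2}$ is a three-network decomposition on the tree, and the only obstruction to lifting it to $G$ is this single crossing. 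I would then remove the crossing by branch swapping a path of $\omega_{1}$ against $\beta_{1}$ or $\beta_{2}$, exactly as in the proof of Lemma~\ref{lemma:graph3}, and show that such a swap is available unless the endpoints of the paths of $\omega_{1}$ on the cycle legs fall precisely into the patterns of forms one, two and three.

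The main obstacle is this last recombination step: one must track how the paths of $\omega_{1}$ interleave with the arcs $P_{0},\dots,P_{2k}$ around the cycle and verify that a crossing-eliminating branch swap exists in every configuration except the three listed ones. The bookkeeping is essentially a proper two-colouring problem on the odd cycle of arcs: the crossing is exactly the defect of two-colouring an odd cycle, a path of $\omega_{1}$ supplies a chord that may or may not resolve it, and forms one through three are precisely the cases where $\omega_{1}$ is aligned with the arcs so that no resolving chord is created and the parity obstruction of the first paragraph persists.
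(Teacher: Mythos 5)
Your ``$\Leftarrow$'' direction is correct and in fact cleaner than the paper's: you use the parity invariant (a network on a polygon graph has an even number of value-one cycle legs) to rule out a three-network decomposition directly, whereas the paper instead proves that the $1+2$ decomposition of such an $\omega$ is unique and then invokes indecomposability of $\omega_2$. Your setup for ``$\Rightarrow$'' is also sound: the reduction to a $1+2$ decomposition via the degree bound, the identification of $e_0,\dots,e_{2k}$ from Theorem~\ref{thm:buczynska}, the exclusion of cycle edges with value $0$ or $3$ via Lemma~\ref{lemma:decomposition_0_or_d}, and the observation that any two-network splitting of $\omega_2$ on the cut tree must cross at the cut are all as in the paper.

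The genuine gap is the step you yourself flag as ``the main obstacle'': showing that whenever $\omega_1$ is not one of the three listed networks, the decomposition can be rearranged so that the degree-two part splits. This is not a side verification --- it is essentially the entire content of the paper's proof, which runs through roughly a dozen explicit configurations (an arc $P_i$ disjoint from $\omega_1$; two legs $e_i,e_j$ not met by $\omega_1$; $\omega_1$ containing the cycle path; paths $P_{e',e''}$ with one or both endpoints outside $\{e_0,\dots,e_{2k}\}$, split by the parity of the displacement $j-i$; paths $P_{e_i,e_j}$ spanning several arcs; and coexistence of $P_{e_i,e'}$ with $P_{e_j,e_{j+2}}$), and in each one constructs a specific sum-preserving exchange. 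Moreover, your proposed mechanism --- a single branch swap at the cut vertex ``as in Lemma~\ref{lemma:graph3}'' --- is too weak to realize these exchanges: the path of $\beta_1$ or $\beta_2$ you would swap into $\omega_1$ may share edges with other paths of $\omega_1$, so the result need not be a network. The paper's moves instead exchange entire unions $\Gamma$ of paths between $\omega_1$ and $\omega_2$ and then re-partition the new degree-two part into two networks, and which $\Gamma$ works depends on the parity of how the endpoints of $\omega_1$'s paths interleave with the $e_i$. The ``two-colouring defect on an odd cycle of arcs'' picture is a reasonable heuristic for why exactly three exceptional shapes survive, but as written it does not certify that a resolving exchange exists in every non-exceptional configuration, so the forward implication remains unproved.
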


\begin{proof}
A degree three labeling $\omega$ can be always decomposed as $\omega=\omega_1+\omega_2$ with $\deg(\omega_1)=1$ and $\deg(\omega_2)=2$. We show that unless $\omega_1,\omega_2$ are as in the statement of the lemma, we can alter $\omega_1,\omega_2$ to get $\omega'_1,\omega'_2$ such that $\omega_1+\omega_2=\omega'_1+\omega'_2$ and $\omega'_2$ decomposes as a sum of two degree one labelings. 

If there is $P_i$ such that it does not intersect the network $\omega_1$, then the union of $\omega_1$ and $P_i$ is a network and the complement of $P_i$ in $\omega_2$ decomposes as the sum of $P_{i+1}\cup P_{i+3} \cup \ldots \cup P_{i-2}$ and $P_{i+2}\cup P_{i+4} \cup \ldots \cup P_{i-1}$. We will assume from now on that every $P_i$ intersects $\omega_1$.

If there exist $e_i$ and $e_j$ such that neither of them is incident to a path in $\omega_1$, then either $e_j=e_{i+2l}$  or $e_i=e_{j+2l}$ for some $1\leq l \leq k$. In the first case, let $\Gamma$ be the union of paths in $\omega_1$ from $e_i$ to $e_j$ and define $\omega'_1=\omega_1 \backslash \Gamma \cup P_{i} \cup P_{i+2} \cup \ldots \cup P_{j-2}$ and $\omega'_2=\omega_2 \backslash (P_{i} \cup P_{i+2} \cup \ldots \cup P_{j-2}) \cup \Gamma$.  Then $\omega'_2$ decomposes as the sum $\Gamma\cup P_{j} \cup P_{j+2} \cup \ldots \cup P_{i-1}$ and $P_{j+1}\cup P_{j+3}\cup \ldots \cup P_{i-2}\cup P_{i+1} \cup P_{i+3} \cup \ldots \cup P_{j-1}$.  In the second case, the same discussion applies for $i$ and $j$ exchanged. We will assume from now on that there is at most one $e_i$ that is not incident to a path in $\omega_1$.

If $\omega _1$ corresponds to the cycle path $P_{\textrm{cycle}}$ and $k\geq 1$, then $\omega$ decomposes as the sum of $P_{e_2,e_1}$ and $P_{e_0,e_2}\cup P_3 \cup P_5 \cup \ldots \cup P_{2k-1}$ and $P_{e_1,e_3}\cup P_4 \cup P_6 \cup \ldots \cup P_{2k}$. Here we use that $P_{\textrm{cycle}}\cup P_0 \cup P_1 \cup P_2 = P_{e_0,e_2} \cup P_{e_1,e_3} \cup P_{e_2,e_1}$.

It there is a path $P_{e',e''}$  in $\omega_1$ such that not both $e',e''$ belong to $\{e_0,\ldots ,e_{2k}\}$, then we consider five different cases: 
\begin{itemize}
\item If there is a path $P_i$  such that $e',e''$ are both between $e_i$ and $e_{i+1}$, define $\omega_1'=\omega_1\backslash P_{e',e''}$ and $\omega_2''=\omega_2\cup P_{e',e''}$. Since $P_{e',e''}\cup P_i=P_{e_i,e''}\cup P_{e',e_{i+1}}$, the labeling $\omega'_2$ decomposes as the sum of $P_{e_i,e''}\cup P_{i+1}\cup P_{i+3}\cup \ldots \cup P_{i-2}$ and $P_{e',e_{i+1}}\cup P_{i+2}\cup P_{i+4}\cup \ldots \cup P_{i-1}$. 

\item If there is a path $P_{i}$ such that $e'$ is before $e_i$ and $e''$ is between $e_{i}$ and $e_{i+1}$, define $\omega'_1=\omega_1\backslash P_{e',e''} \cup P_{e',e_i}$ and $\omega'_2=\omega_2 \backslash (P_{i-1}\cup P_i) \cup P_{e_{i-1},e_{i+1}} \cup P_{e_i,e''}$. Then $\omega_1+\omega_2=\omega'_1+\omega'_2$, since $P_{e',e''}\cup P_{i-1}\cup P_i=P_{e',e_i}\cup P_{e_{i-1},e_{i+1}} \cup P_{e_i,e''}$. Then $\omega'_2$ decomposes as the sum of $P_{e_{i-1},e_{i+1}}\cup P_{i+2},\ldots \cup P_{{i-3}}$ and $P_{e_i,e''}\cup P_{i+1} \cup \ldots \cup P_{i-2}$. 

\item If there are paths $P_{e_i,e'}$ and $P_{e'',e_{j}}$ in $\omega_1$ such that $e'$ is between $e_i$ and $e_{i+1}$, and $e''$ is between $e_{j-1}$ and $e_{j}$ and $e_j=e_{i+2l+1}$ for some $0\leq l\leq k-1$, let $\Gamma$ be the union of paths between $e_i$ and $e_j$.  Define $\omega'_1=\omega_1\backslash \Gamma \cup P_i \cup P_{i+2}\cup \ldots \cup P_{j-1}$ and $\omega'_2=\omega_2 \backslash (P_i \cup P_{i+2}\cup \ldots \cup P_{j-1}) \cup  \Gamma$. Then $\omega'_2$ decomposes as the sum of $P_{e_i,e'}\cup P_{i+1} \cup P_{i+3} \cup \ldots \cup P_{i-2}$ and $\Gamma \backslash P_{e_i,e'} \cup P_{j+1} \cup P_{j+3} \cup \ldots \cup P_{i-1}$.

\item If there are paths $P_{e_i,e'}$ and $P_{e'',e_{j}}$ in $\omega_1$ such that the edge $e'$ is between $e_i$ and $e_{i+1}$, the edge $e''$ is between $e_{j-1}$ and $e_{j}$ and $e_j=e_{i+2l}$ for some $1\leq l\leq k$, let $\Gamma$ be the union of paths between $e_j$ and $e_i$ together with $P_{e_i,e'}$ and $P_{e'',e_{j}}$. Define $\omega'_1=\omega_1\backslash \Gamma \cup P_j \cup P_{j+2} \cup \ldots \cup P_{i-1}$ and $\omega'_2=\omega_2\backslash (P_j \cup P_{j+2} \cup \ldots \cup P_{i-1}) \cup \Gamma$. Then $\omega'_2$ decomposes as the sum of $\Gamma \backslash P_{e_i,e'} \cup P_i \cup P_{i+2} \cup \ldots \cup P_{j-2}$ and $P_{e_i,e'}\cup P_{i+1} \cup P_{i+3} \cup \ldots \cup P_{i-2}$.

\item If there are paths $P_{e_i,e'}$ and $P_{e_j,e''}$ in $\omega _1$ such that the edge $e'$ is between $e_i$ and $e_{i+1}$, the edge $e''$ is between $e_j$ and $e_{j+1}$ and $e_j=e_{i+2l}$ for some $1\leq l\leq k$, let $\Gamma$ be the union of paths in $\omega_1$ between $e_i$ and $e''$ without $P_{e_i,e'}$. Define $\omega'_1=\omega_1 \backslash \Gamma \cup P_{i+1} \cup P_{i+3} \cup \ldots \cup P_{j-1}$ and $\omega'_2=\omega_2\backslash (P_{i+1} \cup P_{i+3} \cup \ldots \cup P_{j-1}) \cup \Gamma$. Then $\omega'_2$ decomposes as the sum of $P_{e_j,e''}\cup P_{j+1} \cup P_{j+3} \cup \ldots \cup P_{j-2}$ and $P_{j} \cup P_{j+2} \cup \cdots \cup P_{i-1} \cup \Gamma \backslash P_{e_j,e''}$. If $e_j=e_{i+2l+1}$ for some $0\leq l\leq k-1$ then the same discussion works for $i$ and $j$ exchanged. 
\end{itemize}
If none of the five if-conditions holds, then the unique path of the form $P_{e',e''}$  in $\omega_1$ such that not both $e',e''$ belong to $\{e_0,\ldots ,e_{2k}\}$ must be  $P_{e_i,e'}$ or $P_{e'',e_{i+1}}$ with $e,'e''$ between $e_i$ and $e_{i+1}$.  

If there is a path in $\omega_1$ of the form $P_{e_i,e_j}$, then we consider three different cases:
\begin{itemize}
\item If there exists $P_{e_i,e_j}$ with $e_j=e_{i+2l+1}$ for $1\leq l \leq k-1$, then define $\omega'_1=\omega_1\backslash P_{e_i,e_j} \cup P_i \cup P_{i+2} \cup \ldots \cup P_{j-1}$ and $\omega'_2=\omega_2\backslash (P_i \cup P_{i+2} \cup \ldots \cup P_{j-1}) \cup P_{e_i,e_j}$. Since $P_{e_i,e_j}\cup P_{j-2}=P_{e_i,e_{j-1}}\cup P_{e_{j-2},e_j}$, then $\omega'_2$ decomposes as the sum of $P_{e_i,e_{j-1}}\cup P_j \cup P_{j+2} \cup \ldots \cup P_{i-2}$ and $P_{i+1} \cup P_{i+3} \cup \ldots \cup P_{j-4}\cup P_{e_{j-2},e_j}\cup P_{j+1} \cup P_{j+3} \cup \ldots \cup P_{i-1}$.

\item If there exists $P_{e_i,e_j}$ with $e_j=e_{i+2l}$ for $2\leq l \leq k$, assume that $j=0$. Define $\omega'_1=\omega_1 \backslash P_{e_i,e_j} \cup P_{e_i,e_{j-2}} \cup P_{j-1}$ and $\omega'_2=\omega_2 \backslash (P_{j-3} \cup P_{j-2} \cup P_{j-1}) \cup P_{j-3,j-1} \cup P_{j-2,j}$. Then $\omega_1+\omega_2=\omega'_1+\omega'_2$ since $P_{e_i,e_j}\cup P_{j-3}\cup P_{j-2} \cup P_{j-1}=P_{e_i,e_{j-2}} \cup P_{j-3,j-1} \cup P_{j-2,j}\cup P_{j-1}$. Then $\omega'_2$ decomposes as the sum of $P_0 \cup P_2 \cup \ldots \cup P_{j-3,j-1}$ and $P_1 \cup P_3 \cup \ldots \cup P_{j-2,j}$.

\item If there exist $P_{e_{i_1},e_{i_2}},P_{e_{j_1},e_{j_2}}$ such that $e_{i_2}=e_{i_1+2l},e_{j_2}=e_{j_1+2m}$ for $1\leq l,m \leq k$ and $e_{j_2}=e_{i_1+2n+1}$ for some $0\leq n\leq k-1$, assume that $P_{j_1,j_2}$ is the next path with such property after $P_{i_1,i_2}$ in clockwise direction. Denote all paths between $e_{i_1}$ and $e_{j_2}$ in $\omega_1$ by $\Gamma$. Define $\omega'_1=\omega_1 \backslash \Gamma \cup P_{i_1} \cup P_{i_1+2} \cup \ldots \cup P_{j_2-1}$ and $\omega'_2=\omega_2 \backslash (P_{i_1} \cup P_{i_1+2} \cup \ldots \cup P_{j_2-1}) \cup \Gamma$. Then $\omega'_2$ decomposes as the sum of $\Gamma \backslash \ P_{e_{j_1},e_{j_2}} \cup P_{j_1} \cup P_{j_1+2} \cup \cdots \cup P_{i_1-2}$ and $P_{i_1+1} \cup P_{i_1+3} \cup \ldots \cup P_{j_1-2} \cup P_{e_{j_1},e_{j_2}} \cup P_{j_2+1} \cup \ldots \cup P_{i_1-1}$. If $e_{j_2}=e_{i_1+2n}$ for some $1\leq n\leq k-1$ then the same discussion works for $i$'s and $j$'s exchanged.
\end{itemize}

If none of the three if-conditions holds, then only paths in $\omega_1$ of the form $P_{e_i,e_j}$ can be $P_i$ and at most one $P_{e_j,e_{j+2}}$.

Finally we have to show that $\omega_1$ cannot simultaneously contain paths $P_{e_i,e'}$ and $P_{e_j,e_{j+2}}$ where $i,j\in\{0,\ldots ,2k\}$ and $e'$ is between $e_i$ and $e_{i+1}$. We consider two different cases: $e_j=e_{i+2l}$ for $1\leq l \leq k-1$ and $e_j=e_{i+2l+1}$ for $0\leq l \leq k-1$. If we have $P_{e',e_i}$ instead of $P_{e_i,e'}$ then we can apply the same discussion in the counterclockwise direction.

In the first case there must be $e_t$ with $t=i+2l+1$ between $e'$ and $e_j$ that is not incident to any of the paths in $\omega_1$. Otherwise paths between $e'$ and $e_j$ in $\omega_1$ would be $P_{i+1},P_{i+3},\ldots,P_{j-1}$, which is not possible, since $P_{e_j,e_{j+2}}$ is in $\omega _1$. Let $\Gamma$ be the union of paths in $\omega_1$ between $e_i$ and $e_t$. Define $\omega'_1=\omega_1 \backslash \Gamma \cup P_{i} \cup P_{i+2} \cup \ldots \cup P_{t-1}$ and $\omega'_2=\omega_2 \backslash (P_{i} \cup P_{i+2} \cup \ldots \cup P_{t-1}) \cup \Gamma$. Then $\omega'_2$ decomposes as the sum of $\Gamma \cup P_t \cup P_{t+2} \cup \ldots P_{i-2}$ and $P_{t+1}\cup P_{t+3} \cup \ldots P_{t-2}$. 

In the second case let $\Gamma$ be the union of paths in $\omega_1$ from $e_i$ to $e_{j+2}$. Define $\omega'_1=\omega_1 \backslash \Gamma \cup P_i \cup P_{i+2} \cup \ldots \cup P_{j+1}$ and $\omega'_2=\omega_2 \backslash (P_i \cup P_{i+2} \cup \ldots \cup P_{j+1}) \cup \Gamma$. Then $\omega'_2$ decomposes as the sum of $P_{e_i,e'}\cup P_{i+1} \cup P_{i+3} \cup \ldots \cup P_{i-2}$ and $\Gamma \backslash P_{e_i,e'} \cup P_{j+3} \cup P_{j+5} \cup \ldots \cup P_{i-1}$.

If none of the previous is true for $\omega$, then $\omega$ is as in the statement of the lemma. We will show that the only decomposition of $\omega$ is $\omega=\omega_1+\omega_2$. This implies that we cannot decompose $\omega$ as a sum of degree one labelings as $\omega_2$ is indecomposable.  

In all four cases, we have $\omega_e\geq 2$ for a cycle leg $e$ if and only if $e\in \{e_0,\ldots ,e_{2k}\}$. Moreover, $\omega_{\overline{e}}=2$ holds for at most one $\overline{e}\in \{e_0,\ldots ,e_{2k}\}$. We construct a decomposition $\omega=\omega'_1+\omega'_2$ with $\deg(\omega'_1)=1$ and $\deg(\omega'_2)=2$. If $\omega_1=P_{0}\cup P_2 \cup \ldots P_{2k-2} \cup P_{e_{2k},e'}$, then $(\omega'_2)_e=2$ for all $e\in \{e_0,\ldots ,e_{2k}\}$ and $(\omega'_2)_e=0$ for all other cycle legs. Indeed, there is only one cycle leg $e'$ left with value one, but since the sum of values on all leaf edges must be even we have $(\omega'_2)_{e'}=0$. For the other three cases $(\omega'_2)_e=2$ for all $e\in \{e_0,\ldots ,e_{2k}\}\backslash{\overline{e}}$, since $\omega_e=3$ for  $e\in \{e_0,\ldots ,e_{2k}\}\backslash{\overline{e}}$. Thus also $(\omega'_2)_{\overline{e}}=2$, since the sum of values on all leaf edges must be even. It follows that $(\omega'_2)_e=1$ for all cycle edges, hence $\omega'_i=\omega_i$ for $i\in\{1,2\}$.
\end{proof}

\begin{corollary}\label{cor:unique_decomposition}
Let $G$ be a polygon graph and $\omega \in \tau(G)$ a degree three labeling. Then $\omega $ cannot be decomposed as a sum of degree one labelings if and only if $\omega$ can be decomposed uniquely as $\omega =\omega _1+\omega_2$ with $\deg(\omega_1)=1$ and $\deg(\omega_2)=2$.
\end{corollary}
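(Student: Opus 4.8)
The plan is to deduce this corollary directly from Lemma~\ref{lemma:decompositions_of_degree_three_labelings} together with Corollary~\ref{lemma:polygon_graph}, since the lemma already characterizes exactly when a degree three labeling $\omega$ on a trivalent polygon graph fails to decompose as a sum of degree one labelings. First I would handle the reduction from a general polygon graph to a trivalent one: by the construction in Corollary~\ref{cor:first_betti_number_1} (replacing high-valency vertices by chains of trivalent vertices without introducing new cycle legs), together with Corollary~\ref{lemma:polygon_graph}, decomposability of $\omega$ is equivalent to decomposability of its lift $\omega'$ on the associated trivalent polygon graph $G'$, and degrees are preserved under the coordinate projection. So it suffices to prove the equivalence on a trivalent polygon graph, where Lemma~\ref{lemma:decompositions_of_degree_three_labelings} applies.

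The forward direction is the substantive one but is essentially already contained in the lemma's proof. If $\omega$ cannot be written as a sum of three degree one labelings, then Lemma~\ref{lemma:decompositions_of_degree_three_labelings} tells us $\omega=\omega_1+\omega_2$ with $\deg(\omega_1)=1$, $\deg(\omega_2)=2$, and $\omega_1,\omega_2$ of the specific combinatorial form listed there. The final paragraph of that proof verifies that any decomposition $\omega=\omega'_1+\omega'_2$ into a degree one plus a degree two piece must satisfy $(\omega'_2)_e=2$ on the cycle legs $e_0,\ldots,e_{2k}$ and $0$ elsewhere, forcing $(\omega'_2)_e=1$ on all cycle edges and hence $\omega'_i=\omega_i$. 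I would cite this uniqueness argument directly to conclude that the degree one plus degree two decomposition exists and is unique.

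For the reverse direction, suppose $\omega$ admits a \emph{unique} decomposition $\omega=\omega_1+\omega_2$ with $\deg(\omega_1)=1$ and $\deg(\omega_2)=2$. I claim $\omega$ cannot be a sum of three degree one labelings: if it were, say $\omega=\eta_1+\eta_2+\eta_3$ with each $\eta_i$ of degree one, then grouping $(\eta_1)+(\eta_2+\eta_3)$ and $(\eta_2)+(\eta_1+\eta_3)$ would give two decompositions into a degree one plus a degree two summand, and these would coincide by uniqueness. That forces $\eta_1=\eta_2$ and $\eta_2+\eta_3=\eta_1+\eta_3$, which upon comparing the degree two parts gives $\eta_1=\eta_2=\eta_3$; but then $\omega=3\eta_1$ has all coordinates divisible by the coordinates of a single network, and in particular $\omega_e\le 3$ with $\omega_2=2\eta_1$ decomposing as $\eta_1+\eta_1$ into degree one pieces, contradicting that the chosen degree two part is indecomposable. (Equivalently, and more cleanly: a decomposition into three degree one labelings would yield, by pairing, a second degree one plus degree two splitting distinct from the given one unless all three networks are equal, and the equal case makes $\omega_2$ decomposable, contradicting uniqueness.)

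The main obstacle is making the reverse direction airtight: I must rule out the degenerate possibility that all three degree one summands coincide, and confirm that this still contradicts uniqueness rather than slipping through. The cleanest formulation is to observe that uniqueness of the degree one plus degree two splitting is logically equivalent to $\omega_2$ being indecomposable in that splitting, and $\omega_2$ being indecomposable is precisely what prevents refining into three degree one pieces; so the two conditions in the corollary are two phrasings of the same combinatorial fact established in Lemma~\ref{lemma:decompositions_of_degree_three_labelings}. I would therefore structure the argument so that both directions funnel through the lemma's concluding uniqueness computation, keeping the new content minimal.
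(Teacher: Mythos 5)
Your forward direction is correct and is exactly the route the paper intends: Lemma~\ref{lemma:decompositions_of_degree_three_labelings} supplies the splitting $\omega=\omega_1+\omega_2$, and the closing paragraph of its proof shows that any splitting into a degree one plus a degree two part must agree with it on all cycle legs and all cycle edges, hence everywhere on a polygon graph.

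The reverse direction, however, has a genuine gap, and it sits exactly where you suspected. Your pairing argument correctly reduces to the degenerate case $\eta_1=\eta_2=\eta_3$, i.e.\ $\omega=3\eta$ for a network $\eta$, but the claim you use to dispose of it --- that uniqueness of the degree one plus degree two splitting is logically equivalent to indecomposability of $\omega_2$ --- is false. Uniqueness only forces $\omega_2=2\omega_1$ in that case; it yields no contradiction. Concretely, let $\eta$ be the network consisting of the whole cycle (value $1$ on every cycle edge, $0$ on every cycle leg) and set $\omega=3\eta$. At each inner vertex the leg carries value $0$, so the triangle inequality forces any splitting $\omega=\omega_1+\omega_2$ with $\deg(\omega_1)=1$, $\deg(\omega_2)=2$ to be constant on the cycle, and the degree inequalities then force $\omega_1=\eta$, $\omega_2=2\eta$; the splitting is unique, yet $\omega=\eta+\eta+\eta$ is a sum of degree one labelings. (The all-zero degree three labeling is an even more degenerate instance.) So the degenerate case does slip through, and no argument can close it: the reverse implication as literally stated fails, and the corollary must be read with the implicit requirement that $\omega_2$ be indecomposable --- which is what Lemma~\ref{lemma:decompositions_of_degree_three_labelings} actually delivers, and only the forward direction is used later in Lemma~\ref{lemma:degree_three_generators}. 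A secondary point: your reduction from general to trivalent polygon graphs asserts that decomposability of $\omega$ is \emph{equivalent} to decomposability of a lift $\omega'$, but the paper only proves that decomposability descends along the coordinate projection; since Lemma~\ref{lemma:decompositions_of_degree_three_labelings} and the rest of the section concern trivalent graphs, it is safer to read ``polygon graph'' here as ``trivalent polygon graph'' than to rely on that unproved equivalence.
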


\begin{corollary}
Let $G$ be a trivalent graph with first Betti number 1 and $\omega \in \tau(G)$ a degree three labeling. Then $\omega $ cannot be decomposed as a sum of degree one labelings if and only if $\omega =\omega _1+\omega_2$ such that
\begin{itemize}
\item $\deg(\omega_1)=1$ and $\deg(\omega_2)=2$,
\item $\omega_2$ is indecomposable with value two on cycle legs $e_0,\ldots ,e_{2k}$ and value zero on all other cycle legs,
\item $\omega_1$ restricted to the unique cycle with its cycle legs is $P_0\cup P_2 \cup \ldots \cup P_{2k-2}$, $P_0 \cup P_2 \cup \ldots P_{2k-4} \cup P_{e_{2k-2},e_{2k}}$, $P_{0}\cup P_2 \cup \ldots P_{2k-4} \cup P_{e_{2k-2},e'}$ where $e'$ is a cycle leg between $e_{2k-2}$ and $e_{2k-1}$ or also the cycle path if $k=0$.
\end{itemize}
\end{corollary}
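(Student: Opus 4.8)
The plan is to reduce everything to the polygon case already settled in Lemma~\ref{lemma:decompositions_of_degree_three_labelings}. Since $G$ is trivalent with first Betti number $1$, Corollary~\ref{lemma:polygon_graph} lets me cut the non-cycle inner edges and write $G^{e_1,\ldots,e_k}=G_0\sqcup G_1\sqcup\cdots\sqcup G_k$, where $G_0$ is the unique cycle together with its cycle legs (a polygon graph, because $G$ is trivalent and hence each cycle vertex carries exactly one leg) and $G_1,\ldots,G_k$ are trees. The element $\omega$ restricts to $\omega|_{G_0}\in\tau(G_0)$ of degree three, and restriction to $G_0$ is an additive, degree-preserving map, so a decomposition $\omega=\sum_i\eta_i$ restricts to $\omega|_{G_0}=\sum_i\eta_i|_{G_0}$ with $\deg(\eta_i|_{G_0})=\deg(\eta_i)$. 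Conversely, Corollary~\ref{lemma:polygon_graph} guarantees that every decomposition of $\omega|_{G_0}$ lifts, degree by degree, to a decomposition of $\omega$ with $\omega_i|_{G_0}$ equal to the original pieces.

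For the converse direction I would argue by contraposition. Assume $\omega=\omega_1+\omega_2$ with the three stated properties; restricting to $G_0$ gives $\omega|_{G_0}=\omega_1|_{G_0}+\omega_2|_{G_0}$, which is exactly a decomposition of the shape in Lemma~\ref{lemma:decompositions_of_degree_three_labelings} (the listed forms of $\omega_1|_{G_0}$ and the ``value two on $e_0,\ldots,e_{2k}$, value zero elsewhere'' form of $\omega_2|_{G_0}$ transport verbatim to the polygon $G_0$). By that lemma $\omega|_{G_0}$ cannot be decomposed as a sum of degree one labelings. If $\omega$ itself admitted a decomposition into three networks, restricting it to $G_0$ would produce a sum of three degree one labelings equal to $\omega|_{G_0}$, a contradiction. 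Hence $\omega$ is not a sum of degree one labelings.

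For the forward direction, suppose $\omega$ cannot be written as a sum of degree one labelings. Then neither can $\omega|_{G_0}$: otherwise a degree one decomposition of $\omega|_{G_0}$ would lift, via Corollary~\ref{lemma:polygon_graph}, to a degree one decomposition of $\omega$. By Lemma~\ref{lemma:decompositions_of_degree_three_labelings} we obtain $\omega|_{G_0}=\overline{\omega_1}+\overline{\omega_2}$ with $\deg(\overline{\omega_1})=1$, $\deg(\overline{\omega_2})=2$, and $\overline{\omega_1},\overline{\omega_2}$ of the prescribed shapes on the polygon. I then lift this decomposition through Corollary~\ref{lemma:polygon_graph} to $\omega=\omega_1+\omega_2$ with $\omega_i|_{G_0}=\overline{\omega_i}$ and the same degrees. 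Now $\omega_1$ restricted to the cycle with its cycle legs equals $\overline{\omega_1}$, one of the listed forms, and $\omega_2|_{G_0}=\overline{\omega_2}$ has value one on all cycle edges, value two on $e_0,\ldots,e_{2k}$, and value zero on the remaining cycle legs; by Lemma~\ref{cor:degree_2_generators} this last property already forces $\omega_2$ to be indecomposable on all of $G$. This produces the required decomposition.

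The main obstacle is bookkeeping rather than a new idea: I must verify that restriction to $G_0$ preserves degree exactly, which holds because the grading is the shared last coordinate and is unaffected by cutting, so degree one pieces stay degree one and the degree three element stays degree three. I also need that ``the unique cycle with its cycle legs'' in the statement is literally the polygon graph $G_0$ produced by Corollary~\ref{lemma:polygon_graph}, so that the explicit path shapes of Lemma~\ref{lemma:decompositions_of_degree_three_labelings} carry over unchanged. A minor point to handle with care is that a network on $G$ avoiding $G_0$ restricts to the degree one empty labeling on $G_0$; since the degrees of the three restricted pieces still sum to three, this causes no trouble in the contradiction used for the converse.
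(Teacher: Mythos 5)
Your proof is correct and takes exactly the same route as the paper's own one-line proof: reduce to the polygon $G_0$ via Corollary~\ref{lemma:polygon_graph} and apply Lemma~\ref{lemma:decompositions_of_degree_three_labelings}, together with the routine checks that restriction and lifting preserve degrees and that Lemma~\ref{cor:degree_2_generators} forces the lifted $\omega_2$ to be indecomposable on all of $G$. One caveat: the third listed form of $\omega_1$ in the corollary's statement does not literally match the third form in Lemma~\ref{lemma:decompositions_of_degree_three_labelings} (the indices and the position of $e'$ differ), so the transport is not quite ``verbatim'' as you claim --- this appears to be an indexing slip in the paper's statement rather than a gap in your argument, but it is worth flagging rather than glossing over.
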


\begin{proof}
The statement follows directly from Lemma~\ref{lemma:decompositions_of_degree_three_labelings} and~\ref{lemma:polygon_graph}. 
\end{proof}

\begin{lemma}\label{lemma:degree_three_generators}
Let $G$ be a trivalent graph with first Betti number 2 where the two cycles are separated by at least one inner vertex and $\omega \in \tau(G)$ a degree three labeling. Then $\omega $ is indecomposable if and only if the following conditions are fulfilled:
\begin{itemize}
\item[(i)] $\omega$ restricted to any cycle with its cycle legs does not decompose as a sum of degree one labelings, 

\item[(ii)] $\omega$ restricted to an edge on the shortest path between two cycles has value one or two,

\item[(iii)] $\omega$ restricted to exactly one edge incident to an edge on the shortest path between two cycles that is not a cycle edge or an edge on the shortest path has value one or two, and has value zero or three on all other such edges.
\end{itemize}
\end{lemma}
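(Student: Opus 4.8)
The plan is to first reduce to the cleanest possible graph and then prove the two implications separately, in each case pushing the first-Betti-number-2 situation down to the first-Betti-number-1 classification that is already available. I would invoke Corollary~\ref{lemma:polygon_graph} to assume that $G$ is a trivalent multiple polygon graph: cutting off the tree parts changes neither indecomposability of a fixed $\omega$ nor the hypotheses, since the two cycles remain separated by at least one inner vertex. After this reduction $G$ consists of two cycles $C_1,C_2$ joined by the shortest path $Q$ with edges $f_1,\dots,f_m$ (with $m\ge 2$), where each interior vertex $v_j$ of $Q$ carries a single pendant leg $g_j$, together with the cycle legs of $C_1$ and $C_2$. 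The edges named in condition (iii) are then exactly these $g_j$. I would also record that a degree three $\omega$ is indecomposable precisely when it admits no splitting $\omega=\omega_1+\omega_2$ with $\deg\omega_1=1$ and $\deg\omega_2=2$, since a $(1,1,1)$ splitting would in particular give a $(1,2)$ one.

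For the backward implication I would assume (i)--(iii) together with a hypothetical splitting $\omega=\omega_1+\omega_2$ where $\omega_1$ is a network and $\deg\omega_2=2$, and aim at a contradiction. Restricting to the first-Betti-number-1 subgraph $C_i$ with its legs (which includes $f_1$, resp.\ $f_m$, as a leg), condition (i) together with the degree three classification on first-Betti-number-1 graphs forces, via the uniqueness in Corollary~\ref{cor:unique_decomposition}, that $\omega_2|_{C_i}$ is the indecomposable degree two labeling and $\omega_1|_{C_i}$ the accompanying network. In particular $\omega_2$ equals $1$ on every cycle edge and takes a value in $\{0,2\}$ on $f_1$ and $f_m$. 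Using condition (ii) to pin down $\omega_{f_1}\in\{1,2\}$, I would propagate the value of $\omega_2$ along $Q$ by the parity condition, the triangle inequalities, and the degree inequality at each $v_j$; the pendants with $\omega_{g_j}\in\{0,3\}$ (all but one, by (iii)) force $(\omega_2)_{g_j}\in\{0,2\}$ and hence propagate the path value in a controlled way. Reaching the unique special leg $g_{j_0}$ of (iii), the degree inequality at $v_{j_0}$ squeezes $(\omega_2)_{g_{j_0}}$ so small that $\omega_1$ must carry value larger than $1$ there, contradicting $\deg\omega_1=1$; this is the generalization of the contradiction in Lemma~\ref{lemma:graph5}.

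For the forward implication I would argue by contrapositive, showing that the failure of any one of (i)--(iii) produces a splitting of $\omega$. If (ii) fails at some $f_i$, I would cut $G$ at $f_i$ into two first-Betti-number-1 graphs; when $\omega_{f_i}=0$ the two leaf values vanish and any decompositions of the two sides supplied by Corollary~\ref{cor:first_betti_number_1} glue directly, while when $\omega_{f_i}=3$ a Lemma~\ref{lemma:decomposition_0_or_d}-style argument makes every summand attain its full degree on the cut, so the pieces again glue. If (i) fails on some cycle, the resulting degree one decomposition there can be transported across $f_1$ (resp.\ $f_m$) and matched with a decomposition of the remainder, again via a cut. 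If (iii) fails I would treat the two offending configurations, no special leg or at least two special legs, separately, in each case rearranging the local paths at the vertices of $Q$ by branch swapping so that a degree one or degree two summand splits off.

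The step I expect to be the main obstacle is the forward direction, specifically the gluing and branch-swapping bookkeeping needed when the path edges may take either value $1$ or $2$ (so that the pendants may be forced to be odd) and the matching of the first-Betti-number-1 pieces across a cut. Handling condition (iii) --- in particular proving that \emph{exactly} one special leg is the sharp threshold, neither fewer nor more being compatible with indecomposability --- is where the case analysis is heaviest and where I would spend most of the effort.
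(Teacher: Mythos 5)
Your overall architecture matches the paper's: reduce to a multiple polygon graph via Corollary~\ref{lemma:polygon_graph}, dispose of failures of (i) and (ii) by cutting and gluing, pin down $\omega_1,\omega_2$ on the two cycles with their legs via the uniqueness in Corollary~\ref{cor:unique_decomposition}, and then propagate a putative decomposition along the connecting path $Q$. However, the key step of the backward implication is misidentified, and this is a genuine gap. You claim that at the unique special leg $g_{j_0}$ the degree inequality at $v_{j_0}$ forces $(\omega_2)_{g_{j_0}}$ to be so small that $\omega_1$ would need value $\geq 2$ there, ``generalizing'' Lemma~\ref{lemma:graph5}. That local squeeze only works in the special configuration of Lemma~\ref{lemma:graph5}, where every path edge carries the value $2$ and hence $\deg_{v}(\omega_2)$ is already saturated by the two path edges. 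In general there is a perfectly valid local decomposition at $v_{j_0}$: for instance with $\omega_{f_{j_0}}=1$, $(\omega_2)_{f_{j_0}}=0$, $\omega_{g_{j_0}}=1$, $\omega_{f_{j_0+1}}=2$, one may take $(\omega_1)_{f_{j_0+1}}=1$, $(\omega_2)_{g_{j_0}}=(\omega_2)_{f_{j_0+1}}=1$, which satisfies parity, the triangle inequalities and the degree inequality. No contradiction arises at $v_{j_0}$; what actually happens (and what the paper proves via the local-decomposition analysis of Figures~\ref{figure:degree_three_local_decompositions} and~\ref{figure:degree_three_local_decompositions_with_options}) is that the $\omega_2$-value on the path is forced to \emph{flip} from $\{0,2\}$ to $1$ at the unique special vertex and then to stay equal to $1$ at every subsequent vertex, so the contradiction only appears globally, when the propagated value $1$ reaches the cycle leg $e_r$ of the second cycle, where Corollary~\ref{cor:unique_decomposition} demands a value in $\{0,2\}$. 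Your argument as stated would not close.

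The same local analysis is also what you are missing in the forward direction for failures of (iii): the paper does not need branch swapping there. With zero special legs the propagated value of $\omega_2$ stays in $\{0,2\}$ all along $Q$ and is automatically compatible with the second cycle; with two or more special legs one uses the \emph{choice} available at a special vertex whose incoming $\omega_2$-value is $1$ (one may send the outgoing value either to $1$ or back to $\{0,2\}$) to flip to $1$ at the first special vertex, hold $1$ through the middle, and flip back at the last one, landing in $\{0,2\}$ at $e_r$. Your sketch gestures at the right place but does not supply this mechanism, and the branch-swapping operation (defined for two networks on a tree) is not the right tool for it.
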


\begin{proof}
By Lemma~\ref{lemma:polygon_graph}, we can assume that $G$ is a multiple polygon graph. Depict the edges on the shortest path between the two cycles horizontally and edges incident to them vertically below them as in Figure~\ref{figure:graphs_with_first_betti_number_2_cases} $(c)$.

Assume $\omega $ restricted to a cycle $G_1$ together with its cycle legs decomposes as a sum of degree one elements. Let $e$ be a cycle leg of $G_1$ on the shortest path between $G_1$ and the other cycle. Write $G^e=G_1\sqcup G_2$. Then $\omega $ decomposes on $G_2$, and this decomposition can be extended to $G$.

Assume there is an edge $\overline{e}$ on the shortest path between two cycles of $G$ such that $\omega _{\overline{e}}\in\{0,3\}$. Let $e',e''$ be new leaf edges obtained by cutting $G$ at $e$ and write $G^{\overline{e}}=G'\sqcup G''$. Then $\omega |_{G'}$ and $\omega |_{G''}$ can be decomposed as
\begin{displaymath}
 \omega |_{G'}=\omega'_1+\omega'_2 \textrm{ and } \omega |_{G''}=\omega''_1+\omega''_2
\end{displaymath}
with $\deg(\omega'_1)=\deg(\omega''_1)=1$ and $\deg(\omega'_2)=\deg(\omega''_2)=2$. Furthermore, $(\omega'_i)_{e'}=(\omega''_i)_{e''}$ for $i=1,2$ and hence they can be combined to a decomposition of $\omega$.

Assume now that the conditions $(i),(ii)$ are fulfilled. The labeling $\omega $ can be decomposed if and only if it can be decomposed as
\begin{displaymath}
 \omega =\omega_1+\omega_2
\end{displaymath}
with $\deg(\omega_1)=1$ and $\deg(\omega_2)=2$. There is a unique way of defining $\omega_1$ and $\omega_2$ on cycles and cycle legs by Corollary~\ref{cor:unique_decomposition}. We try to construct a decomposition of $\omega$ on all other edges step-by-step going from left to right such that the decomposition is compatible with the decomposition on the cycle legs on the shortest path between the two cycles, and study when there exists no such decomposition.   Let $e$ be the leftmost edge of the shortest path between two cycles where $\omega_1$ and $\omega_2$ are defined and let the vertex $v$ be the right endpoint of $e$. We want to define $b_v(\omega_i)$ and $c_v(\omega_i)$ given $a_v(\omega_i)$ for $i=1,2$.

All possible local decompositions of $\omega$ at an inner vertex between the two cycles (assuming that horizontal edges have values one or two) are presented in Figures~\ref{figure:degree_three_local_decompositions} and~\ref{figure:degree_three_local_decompositions_with_options}. In Figure~\ref{figure:degree_three_local_decompositions} the value of $\omega$ at the vertical edge is zero or three. In Figure~\ref{figure:degree_three_local_decompositions_with_options} the value of $\omega$ at the vertical edge is one or two.

\begin{figure}[ht]
\centering
\epsfxsize=280pt\epsfbox{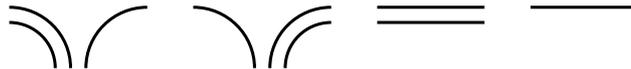}
\caption{Degree three local decompositions, 1}
\label{figure:degree_three_local_decompositions}
\end{figure}

\begin{figure}[ht]
\centering
\epsfxsize=280pt\epsfbox{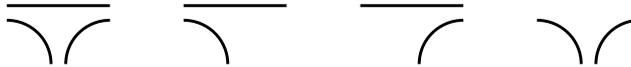}
\caption{Degree three local decompositions, 2}
\label{figure:degree_three_local_decompositions_with_options}
\end{figure}

 Given a local decomposition at $v$ as in Figure~\ref{figure:degree_three_local_decompositions} and $a_v(\omega_i)$, then there is a unique way of defining $b_v(\omega_i)$ and $c_v(\omega_i)$. In particular, if $a_v(\omega_2)\in\{0,2\}$ then $b_v(\omega_2)\in\{0,2\}$. If $a_v(\omega_2)=1$ then $b_v(\omega_2)=1$. Given a local decomposition at $v$ as in Figure~\ref{figure:degree_three_local_decompositions_with_options} and $a_v(\omega_i)$, then there might be a unique way of defining  $b_v(\omega_i)$ and $c_v(\omega_i)$ or not depending on the value of $a_v(\omega_2)$. If $a_v(\omega_2)\in\{0,2\}$ then $b_v(\omega _2)=1$. If $a_v(\omega_2)=1$ then one can define either $b_v(\omega _2)\in \{0,2\}$ or $b_v(\omega _2)=1$.

Let $e$ be a cycle leg  that is on the path between two cycles. If $\omega _{e}=2$, then $(\omega_1)_{e}=0$ and $(\omega_2)_{e}=2$, because a degree two indecomposable element on a cycle can have only values zero and two on cycle legs by Theorem~\ref{thm:buczynska}. If $\omega _{e}=1$, then $(\omega_{e})_1=1$ and $(\omega_{e})_2=0$ for the same reasons. Denote by $e_r$ the cycle leg of the right cycle that are on the path between two cycles.

If the horizontal path contains labelings only as in Figure~\ref{figure:degree_three_local_decompositions}, then $b_v(\omega_2)\in\{0,2\}$ for every vertex $v$ on the horizontal path. In particular, $(\omega_2)_{e_r}\in\{0,2\}$, hence there exists a decomposition of $\omega$.

If at more than one vertex the local decomposition is as in Figure~\ref{figure:degree_three_local_decompositions_with_options}, denote the first such vertex by $v'$ and the last one by $v''$. For all the vertices $v$ left from $v'$ the value $b_v(\omega)\in\{0,2\}$ is uniquely defined. For $v'$ we have $b_{v'}(\omega)=1$. For all the vertices $v$ between $v'$ and $v''$ we can define $b_v(\omega)=1$: If the local decomposition at $v$ is as in Figure~\ref{figure:degree_three_local_decompositions_with_options} then we have this choice by the discussion below. If the local decomposition at $v$ is as in Figure~\ref{figure:degree_three_local_decompositions} then $b_v(\omega_2)=1$ since $a_v(\omega_2)=1$ again by the discussion above. For $v=v''$ define $b_v(\omega_2)\in\{0,2\}$. At all vertices $v$ to the right of $v''$, we have local decompositions as in Figure~\ref{figure:degree_three_local_decompositions}, therefore $b_v(\omega_2)\in\{0,2\}$. In particular, $(\omega_2)_{e_r}\in\{0,2\}$ and the decomposition of $\omega$ on the horizontal path is compatible with the decompositions of $\omega$ on both cycles. Hence $\omega $ is decomposable. 

On the other hand, if at one vertex $v'$ the local decomposition is as in Figure~\ref{figure:degree_three_local_decompositions_with_options} and at all other vertices the local decomposition is as in Figure~\ref{figure:degree_three_local_decompositions}, then $b_v(\omega_2)\in\{0,2\}$ for all vertices $v$ left from $v'$ and $b_v(\omega_2)=1$ for all vertices $v$ to the right of $v'$ including $v'$ itself. In particular, $(\omega_2)_{e_r}=1$ which is not compatible with the values of $\omega_2$ on the right cycle. Since all steps have been uniquely determined, then $\omega$ cannot be decomposed. This completes the proof.
\end{proof}

\section{Examples}\label{section:examples_of_maximal_degrees}

In this section, we will list some examples of graphs with first Betti number 3, 4 and 5 together with the maximal degree of the minimal generating set of their phylogenetic semigroup.\footnote{We thank Christopher Manon for introducing us the trivalent graph with first Betti number 4 and maximal degree one, see Figure~\ref{figure:first_betti_number_four}.} Maximal degrees have been computed with \texttt{Normaliz}~\cite{normaliz}. We will also show that for any natural number $g$ there exists a graph $G$ with first Betti number $g$ such that the maximal degree of a minimal generator of $\tau(G)$ is one. We note that the maximal degree tends to depend on the ``separateness'' of the cycles, exactly as for graphs with first Betti number 2.

\begin{example}
Let $G$ be the graph with first Betti number $g$ that has two vertices and $g+1$ edges between the two vertices, as illustrated in Figure~\ref{figure:two_vertices_graph}. Then $\tau(G)$ is generated in degree one. By cutting all edges of $G$, we get two claw trees $T',T''$ with $g+1$ leaves. Let $\omega\in \tau(G)$ be a degree $d$ labeling. Then $\omega$ gives $\omega'\in T'$ and $\omega'' \in T''$ with $\omega'=\omega''$ that we can decompose as a sum of $d$ degree one labelings exactly the same way on both trees. Gluing the decompositions of $\omega'$ and $\omega''$ gives a decomposition of $\omega$ as a sum of degree one labelings.
\end{example}

\begin{figure}[ht]
\centering
\epsfxsize=80pt\epsfbox{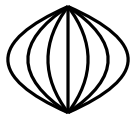}
\caption{Graph with two vertices and seven edges}
\label{figure:two_vertices_graph}
\end{figure} 

\begin{figure}[H]
\centering
\epsfxsize=400pt\epsfbox{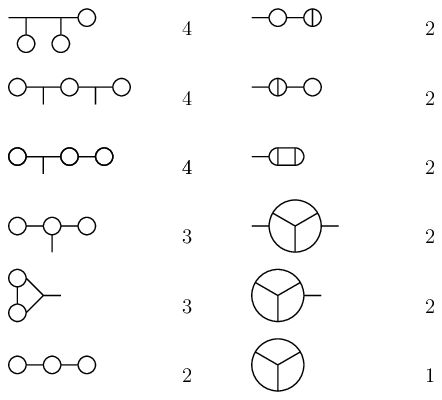}
\caption{Maximal degrees of the minimal generating set of $\tau(G)$ where $G$ is a graph with first Betti number 3}
\end{figure}

\begin{figure}[H]
\centering
\epsfxsize=400pt\epsfbox{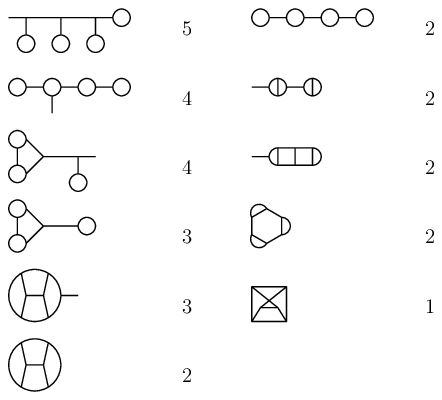}
\caption{Maximal degrees of the minimal generating set of $\tau(G)$ where $G$ is a graph with first Betti number 4}

\label{figure:first_betti_number_four}
\end{figure}

\begin{figure}[H]
\centering
\epsfxsize=400pt\epsfbox{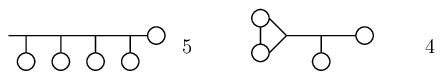}
\caption{Maximal degrees of the minimal generating set of $\tau(G)$ where $G$ is a graph with first Betti number 5}
\end{figure}

\section*{Acknowledgements}

We thank Jaroslaw Buczy{\'n}ski, Christopher Manon and Henning Meyer for their helpful comments.

\bibliographystyle{alpha}
\bibliography{low_degree_minimal_generators_of_phylogenetic_semigroups}

\end{document}